\documentclass[12pt]{article}
\usepackage{amsmath, amsthm, amscd, amsfonts, amssymb, graphicx, color}
\usepackage{epstopdf}
\usepackage{subfigure}
\usepackage{setspace}
\usepackage[left=1in, right=1in, bottom=1.4in]{geometry}
\usepackage{bm}
\usepackage{cite}
\textwidth=16cm \textheight=23cm

\newtheorem{theorem}{Theorem}[section]

\newtheorem{lemma}[theorem]{Lemma}

\newtheorem{example}{Example}[section]
\newtheorem{proposition}[theorem]{Proposition}

\title{\bf Several properties of a class of generalized harmonic mappings \vspace{-2em}}
\date{}
 \begin{document}
\maketitle

\renewcommand{\thefootnote}{\fnsymbol{footnote}}
\noindent{\footnotesize\rm{ \begin{center}\bf Bo-Yong Long \quad Qi-Han  Wang\end{center}\
 \begin{center}  School of Mathematical Sciences, Anhui University, Hefei  230601, China
\end{center}}
\footnotetext{\hspace*{-5mm}
\begin{tabular}{@{}r@{}p{16.0cm}@{}}
&Supported by   NSFC (No.12271001) and  Natural Science Foundation of Anhui Province
(2308085MA03),  China.\\
&E-mail:  boyonglong@163.com  \quad qihan@ahu.edu.cn

\end{tabular}}

%\def\abstractname{}
%\begin{abstract}
\begin{quote}
{\small \noindent {\bf Abstract}:\  We call the solution of a kind of second order homogeneous  partial differential equation as real kernel $\alpha-$harmonic mappings. In this paper, the representation theorem, the Lipschitz continuity, the univalency and the related problems   of the real kernel $\alpha-$harmonic mappings  are explored.
}\\
  {{\small \noindent{\bf Keywords}:  Weighted Laplacian operator; univalency;  Polyharmonic mappings;   Lipschitz continuity; Gauss hypergeometric function }\\
  \small \noindent{\bf 2010 Mathematics Subject Classification}:  Primary 30C45  Secondary 33C05, 30B10
 }
\end{quote}

\section{Introduction}
\setcounter{equation}{0}

\hspace{2mm}

Let $\mathbb{D}$ be the open unit disk and  $\mathbb{T}$ the unit circle.
For $\alpha\in \mathbb{R}$ and  $z\in \mathbb{D}$, let

$$T_{\alpha}=-\frac{\alpha^{2}}{4}(1-|z|^{2})^{-\alpha-1}+\frac{\alpha}{2}(1-|z|^{2})^{-\alpha-1}\left(z\frac{\partial}{\partial z}
+\bar{z}\frac{\partial}{\partial\bar{z}}\right)+(1-|z|^{2})^{-\alpha}\triangle$$
be the second order elliptic  partial differential operator, where $\triangle$ is the usual complex Laplacian operator
$$\triangle:=4\frac{\partial^{2}}{\partial z \partial \bar{z}}=\frac{\partial^{2}}{\partial x^{2}}+\frac{\partial^{2}}{\partial y^{2}}. $$
The corresponding  partial differential equation is
\begin{align}\label{1.1}
T_{\alpha}(u)=0 \quad \mbox {in}  \,\mathbb{D}.
\end{align}
The associated Dirichlet boundary value problem  is

\begin{equation}\label{1.2} \;  \left\{
\begin{array}{rr}
T_{\alpha}(u)=0 &\quad \mbox {in}  \,\mathbb{D},\quad \\
u=u^{*} &\quad \mbox {on}  \,\mathbb{T}.
\end{array}\right.
\end{equation}
Here, the boundary data $u^{*}\in\mathfrak{D}'(\mathbb{T})$
is a distribution on the boundary  of $\mathbb{D}$, and the boundary condition in (\ref{1.2}) is interpreted in the distributional sense that $u_{r}\rightarrow u^{*}$ in $\mathfrak{D}'(\mathbb{T})$
as $r\rightarrow1^{-}$, where
\begin{equation*}
u_{r}(e^{i\theta})=u(re^{i\theta}), \quad e^{i\theta}\in\mathbb{T},
\end{equation*}
for $r\in[0,1)$. In \cite{MR3233580}, Olofsson proved that, for the parameter $\alpha>-1$, if a function $u\in\mathcal{C}^{2}(\mathbb{D})$ satisfies (\ref{1.1}) with $\lim_{r\rightarrow 1^{-}}u_{r}=u^{*}\in\mathfrak{D}'(\mathbb{T})$
, then it has the form of Poisson type integral
\begin{equation}\label{1.3}
u(z)=\frac{1}{2\pi}\int_{0}^{2\pi}K_{\alpha}(ze^{-i\tau)})u^{*}(e^{i\tau})d\tau, \quad \mbox{for}\,\, z\in\mathbb{D},
\end{equation}
where
 \begin{equation}\label{1.4}
 K_{\alpha}(z)=c_{\alpha}\frac{(1-|z|^{2})^{\alpha+1}}{|1-z|^{\alpha+2}},
 \end{equation}
$c_{\alpha}=\Gamma^{2}(\alpha/2+1)/\Gamma(1+\alpha)$ and $\Gamma(s)=\int_{0}^{\infty}t^{s-1}e^{-t}dt$ for $s>0$ is the standard Gamma function.
If  $\alpha\leq -1$, $u\in\mathcal{C}^{2}(\mathbb{D})$  satisfies (1.1), and the boundary limit $u^{*}=\lim_{r\rightarrow 1^{-}}u_{r}$ exists in $\mathfrak{D}'(\mathbb{T})$, then $u(z)=0$ for all $z\in \mathbb{D}$. So, in the following of this paper, we always  assume that $\alpha>-1$.

For $c\neq 0, \,-1, \,-2,...$, the  Gauss hypergeometric function is defined by the  series
$$F(a,b;c; x)=\sum_{n=0}^{\infty}\frac{(a)_{n}(b)_{n}}{(c)_{n}}\frac{x^{n}}{n!}$$ for $ |x|<1$,  and has a continuation to the complex plane with branch points at $1$ and $\infty$,
where $(a)_{0}=1$ and $(a)_{n}=a(a+1)\cdots(a+n-1)$ for $n=1,2,...$ are the Pochhammer symbols. Obviously, for $n=0, 1,2,...$,
$(a)_{n}=\Gamma(a+n)/\Gamma(a)$.  It is easily to verified that

\begin{equation}\label{1.5}
\frac{d}{dx}F(a,b;c;x)=\frac{ab}{c}F(a+1,b+1;c+1; x).
\end{equation}
Furthermore,  it holds that (cf.\cite{MR1688958} )

\begin{equation}\label{1.6}
\lim_{x\rightarrow 1}F(a,b;c; x)=\frac{\Gamma(c)\Gamma(c-a-b)}{\Gamma(c-a)\Gamma(c-b)}
\end{equation}if $Re(c-a-b)>0$.

The following Lemma 1.1 involves the determination of monotonicity of Gauss hypergeometric functions.

\begin{lemma}\cite{MR3233580}\label{lem1.1}
Let $c>0$, $a\leq c$, $b\leq c$ and $ab\leq 0$ $(ab\geq 0)$. Then the function $F(a,b;c;x)$ is decreasing (increasing) on $x\in (0, 1)$.
\end{lemma}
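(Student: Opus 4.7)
My plan is to read off the sign of the derivative of $F(a,b;c;x)$ from the differentiation formula (\ref{1.5}). That formula gives
\[
\frac{d}{dx} F(a,b;c;x) \;=\; \frac{ab}{c}\, F(a+1,b+1;c+1;x),
\]
and since $c>0$, the sign of the left-hand side on $(0,1)$ is governed by the sign of $ab$ together with the sign of $F(a+1,b+1;c+1;x)$. The whole lemma therefore reduces to showing that this auxiliary hypergeometric function is strictly positive on $(0,1)$ under the hypotheses $c>0$, $a\le c$, $b\le c$.

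Term-by-term positivity of the defining power series for $F(a+1,b+1;c+1;x)$ is not obvious here: the parameters $a+1$ and $b+1$ can still be negative, so the Pochhammer symbols $(a+1)_n$, $(b+1)_n$ may change sign. To sidestep this, I would invoke Euler's transformation
\[
F(a+1,b+1;c+1;x) \;=\; (1-x)^{c-a-b-1}\, F(c-a,c-b;c+1;x),
\]
which is exactly designed to turn the hypothesis $a,b\le c$ into something useful. Indeed, after the transformation the new parameters $c-a$ and $c-b$ are nonnegative and $c+1>0$, so every Pochhammer symbol $(c-a)_n$, $(c-b)_n$, $(c+1)_n$ in the transformed series is nonnegative; together with the leading term $1$ this yields $F(c-a,c-b;c+1;x)\ge 1>0$ on $(0,1)$. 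The factor $(1-x)^{c-a-b-1}$ is obviously positive on $(0,1)$, so $F(a+1,b+1;c+1;x)>0$ there.

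Putting the two steps together, the sign of $(d/dx)F(a,b;c;x)$ agrees with that of $ab/c$, i.e.\ with that of $ab$: it is $\le 0$ when $ab\le 0$, giving monotone decreasing, and $\ge 0$ when $ab\ge 0$, giving monotone increasing. The degenerate case $ab=0$ (where one of $a,b$ is zero) just yields $F\equiv 1$, consistent with either conclusion. I expect the main conceptual hurdle to be recognizing that Euler's transformation is the right tool; once that is applied, the hypothesis $a,b\le c$ converts directly into manifest positivity of a power series, and everything else is bookkeeping. Without this trick the sign of $F(a+1,b+1;c+1;x)$ is genuinely awkward, since one would have to track cancellation between terms of alternating sign.
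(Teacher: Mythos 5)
The paper only quotes this lemma from Olofsson \cite{MR3233580} and contains no proof of its own, so there is no internal argument to compare against. Your proof is correct and is essentially the standard one used for this statement: the derivative formula (\ref{1.5}) reduces the claim to the positivity of $F(a+1,b+1;c+1;x)$ on $(0,1)$, and Euler's transformation $F(a+1,b+1;c+1;x)=(1-x)^{c-a-b-1}F(c-a,c-b;c+1;x)$ makes that positivity manifest under the hypotheses $a\le c$, $b\le c$, $c>0$, since the transformed series then has nonnegative terms and constant term $1$.
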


 The following result of \cite{MR3233580} is the homogeneous expansion of solutions of (1.1).

 \begin{theorem}\cite{MR3233580}\label{thm1.2}
 Let $\alpha\in\mathbb{R}$ and $u\in\mathcal{C}^{2}(\mathbb{D})$. Then $u$ satisfies (1.1) if and only if it has a series expansion
 of the form
 \begin{equation}\label{1.7}
 u(z)=\sum_{k=0}^{\infty}c_{k}F(-\frac{\alpha}{2},k-\frac{\alpha}{2}; k+1; |z|^{2})z^{k}+\sum_{k=1}^{\infty}c_{-k}F(-\frac{\alpha}{2},k-\frac{\alpha}{2}; k+1; |z|^{2})\bar{z}^{k}, \quad z\in \mathbb{D},
 \end{equation}
 for some sequence $\{c_{k}\}_{-\infty}^{\infty}$ of complex number satisfying
 \begin{equation}\label{1.8}
 \lim_{|k|\rightarrow\infty}\sup|c_{k}|^{\frac{1}{|k|}}\leq 1.
 \end{equation}
 In particular, the expansion (\ref{1.7}), subject to (\ref{1.8}), converges in $\mathcal{C}^{\infty}(\mathbb{D})$, and every solution $u$ of (\ref{1.1}) is $\mathcal{C}^{\infty}-$smooth in $\mathbb{D}$.

 \end{theorem}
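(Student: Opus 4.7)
The plan is to reduce the PDE $T_\alpha(u)=0$ to an infinite family of radial ODEs via Fourier expansion in the angular variable, identify each ODE's unique smooth solution as a hypergeometric function, and then deal with convergence. Multiplying the equation through by $(1-|z|^2)^{\alpha+1}$ and passing to polar coordinates $z=re^{i\theta}$ (so that $zu_z+\bar z u_{\bar z}=ru_r$ and $\Delta u=u_{rr}+r^{-1}u_r+r^{-2}u_{\theta\theta}$) rewrites (\ref{1.1}) as
\begin{equation*}
-\tfrac{\alpha^2}{4}u+\tfrac{\alpha}{2}ru_r+(1-r^2)\bigl(u_{rr}+\tfrac{1}{r}u_r+\tfrac{1}{r^2}u_{\theta\theta}\bigr)=0.
\end{equation*}

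For the necessity direction, I would first observe that any $u\in\mathcal{C}^2(\mathbb{D})$ admits a Fourier expansion $u(re^{i\theta})=\sum_{k\in\mathbb{Z}}u_k(r)e^{ik\theta}$ with $u_k\in\mathcal{C}^2(0,1)$, obtained by integrating against $e^{-ik\theta}$. Substituting into the displayed equation and equating Fourier modes yields, for each $k\in\mathbb{Z}$, the linear ODE
\begin{equation*}
(1-r^2)\Bigl(u_k''+\tfrac{1}{r}u_k'-\tfrac{k^2}{r^2}u_k\Bigr)+\tfrac{\alpha}{2}ru_k'-\tfrac{\alpha^2}{4}u_k=0.
\end{equation*}
The next step is the Frobenius ansatz $u_k(r)=r^{|k|}v_k(r^2)$ with $s=r^2$; a direct substitution converts the ODE into the Gauss hypergeometric equation with parameters $a=-\alpha/2$, $b=|k|-\alpha/2$, $c=|k|+1$, whose indicial exponents at $s=0$ are $0$ and $-|k|$. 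The $\mathcal{C}^2$ hypothesis at the origin forces $u_k(0)$ to be finite (and, for $k\neq 0$, to vanish appropriately), so only the regular branch $F(-\alpha/2,|k|-\alpha/2;|k|+1;r^2)$ survives. This produces the coefficients $c_k$ ($k\geq 0$) and $c_{-k}$ ($k\geq 1$) and the series shape (\ref{1.7}). The growth bound (\ref{1.8}) then follows from the classical Cauchy--Hadamard test applied on any disk $|z|\le\rho<1$, using that the hypergeometric factors are bounded on such a disk (by Lemma \ref{lem1.1}, or directly from (\ref{1.6})).

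For the sufficiency direction, I would reverse the computation: for each fixed $k$, the function $\varphi_k(z)=F(-\alpha/2,|k|-\alpha/2;|k|+1;|z|^2)z^k$ (and its conjugate version for negative indices) satisfies the radial ODE by construction, hence $T_\alpha(\varphi_k)=0$ pointwise in $\mathbb{D}$. It then remains to check that under (\ref{1.8}) the series (\ref{1.7}) and all its partial derivatives converge locally uniformly in $\mathbb{D}$, so that term-by-term differentiation is justified and $T_\alpha$ can be applied summand-wise. This is where I expect the bulk of the technical work: bounding $F(-\alpha/2,k-\alpha/2;k+1;r^2)$ and its derivatives uniformly in $k$ on compact subsets of $\mathbb{D}$, using the derivative identity (\ref{1.5}) iteratively and the monotonicity provided by Lemma \ref{lem1.1}, so that (\ref{1.8}) combined with geometric decay of $\rho^{|k|}$ on $|z|\le\rho$ yields absolute convergence in $\mathcal{C}^\infty$.

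The main obstacle, and the most delicate step, is the reduction of the radial ODE to the hypergeometric form and the simultaneous exclusion of the singular Frobenius branch: one must show that $\mathcal{C}^2$-smoothness at $z=0$ precisely kills the second fundamental solution for every $k$, including the degenerate case $k=0$ where logarithmic terms could a priori appear. The convergence argument in $\mathcal{C}^\infty$ is conceptually standard but requires care, since the hypergeometric factors depend on $k$ through both the parameter $b=|k|-\alpha/2$ and the parameter $c=|k|+1$; a clean way to handle this is to show that $F(-\alpha/2,|k|-\alpha/2;|k|+1;x)$ stays bounded by a constant independent of $k$ on $[0,\rho^2]$, which follows from the integral representation or from expanding the series and bounding $(|k|-\alpha/2)_n/(|k|+1)_n$ uniformly in $k$.
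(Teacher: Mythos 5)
This theorem is imported verbatim from \cite{MR3233580}; the paper under review gives no proof of it, so your proposal can only be measured against Olofsson's original argument, whose strategy it reproduces: separation into Fourier modes in $\theta$, reduction of each radial ODE to the Gauss hypergeometric equation, exclusion of the singular Frobenius branch (including the logarithmic one at $k=0$) by regularity at the origin, and a uniform-in-$k$ convergence analysis for the converse. The outline is sound, but one step is stated backwards. To extract the growth bound (\ref{1.8}) from the identity $u_k(r)=c_kF(-\frac{\alpha}{2},|k|-\frac{\alpha}{2};|k|+1;r^2)r^{|k|}$ together with the trivial estimate $|u_k(r)|\le\sup_{\theta}|u(re^{i\theta})|$, you must \emph{divide} by the hypergeometric factor, so what you need is that these factors are bounded away from zero in a suitable sense (precisely, that $|F(-\frac{\alpha}{2},|k|-\frac{\alpha}{2};|k|+1;r^2)|^{1/|k|}\rightarrow 1$ and that the factors are nonzero at the chosen $r$), not that they are bounded above; the upper bound is what the sufficiency direction needs. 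The missing lower control is supplied by the limit $F(-\frac{\alpha}{2},k-\frac{\alpha}{2};k+1;x)\rightarrow(1-x)^{\alpha/2}$ as $k\rightarrow\infty$ for fixed $x\in[0,1)$ (the coefficient ratios $(k-\frac{\alpha}{2})_{n}/(k+1)_{n}$ tend to $1$), together with the remark that each factor equals $1$ at $r=0$ and is real-analytic, so its zeros are isolated and $r$ can be chosen to avoid the finitely many problematic radii. With that correction, and with the deferred $\mathcal{C}^{\infty}$ convergence estimates actually carried out, your plan matches the cited proof.
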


Let \begin{equation}\label{1.9}
v(z)=\sum_{k=0}^{\infty}c_{k}z^{k}+\sum_{k=1}^{\infty}c_{-k}\bar{z}^{k}, \quad z\in \mathbb{D}.
\end{equation}
 It is obvious that $v(z)$ is a harmonic mapping, i.e., $\triangle v=0$. We observe that $u(z)$ of (\ref{1.7}) and $v(z)$  have same coefficient sequence $\{c_{k}\}_{-\infty}^{\infty}$. Actually, if $\alpha=0$, then $u(z)=v(z)$.

Observe that  the kernel $K_{\alpha}$ in (\ref{1.4}) is real.  We call $u$  of (\ref{1.3}) or (\ref{1.7}) as {\bf real kernel $\alpha-$harmonic mappings}.
Furthermore, suppose  $u(z)$ and $v(z)$ have the expansions of (\ref{1.7}) and (\ref{1.9}), respectively.
We call $v(z)$ as {\bf the corresponding harmonic mapping } of $u(z)$. Conversely, we call $u(z)$  as {\bf the corresponding real kernel $\alpha-$harmonic mapping} of $v(z)$.

If we take $\alpha=2(p-1)$, then a real kernel $\alpha-$harmonic mapping $u$ is polyharmonic (or $p-$harmonic), where $p\in\{1,2,...\}$ (cf.\cite{MR2386451, MR2684461,  MR3415733,MR3311963, MR3549984, MR3632679, MR3638453, MR3071752}).
In particular, if $\alpha=0$, then $u$ is harmonic (cf.\cite{MR2048384, MR3852565, MR3635221, MR4359800}).  Thus, the real kernel $\alpha-$harmonic mapping is a kind of generalization of classical harmonic mapping.  Furthermore, by \cite{MR3055593}, we know that it is related to standard weighted harmonic mappings.  For the related discussion on standard weighted harmonic mappings, see \cite{MR3535757,  MR3718207, MR3914170, MR4270171}.

For the real kernel $\alpha-$harmonic mappings, the Schwarz-Pick type estimates and coefficient estimates are obtained in \cite{MR3365859}; the starlikeness, convexity and Landau type theorem are studied in \cite{MR4365391}; the sharp Heinz type inequality   is established and the
extremal functions of Schwartz type lemma are explored in \cite{Longbo}; the Lipschitz continuity  with respect to the distance ratio metric is proved in \cite{MR3864837}. In \cite{Khalfallah}, using the properties of the real kernel $\alpha-$harmonic mappings, the authors established some Schwarz type lemmas for mappings satisfying a class of
inhomogeneous biharmonic Dirichlet problem.

In  this paper, we  continue to study the properties of the real kernel $\alpha-$harmonic mappings. The main idea of this paper is that by establishing the relationship between harmonic mapping  and the  corresponding  real kernel $\alpha-$harmonic mapping, we  use the harmonic mapping to characterize the corresponding real kernel $\alpha-$harmonic mapping.
 In section 2, for a nonnegative even number $\alpha$, we get an explicit representation theorem which determines the relation between the real kernel $\alpha-$harmonic mapping and the corresponding harmonic mapping. As its application, in section 3, we show that the Lipschitz continuity of a real kernel $\alpha-$harmonic mapping is determined by the corresponding harmonic mapping. In section 4,   for a subclass of the real kernel $\alpha-$harmonic mappings, we discuss its univalency and explore its Rad\'{o}-Kneser-Choquet type theorem.  In  section 5, we explore the influence of parameters $\alpha$ on the image area of the real kernel $\alpha$-harmonic mappings.

\section{Representation theorem}
\setcounter{equation}{0}
\hspace{2mm}

\begin{theorem}\label{thm 2.1}
 Let $v(z)=h(z)+\overline{g(z)}=\sum_{k=0}^{\infty}c_{k}z^{k}+\sum_{k=1}^{\infty}c_{-k}\bar{z}^{k}$ be a harmonic mapping defined on the unit disk $\mathbb{D}$.
If $\frac{\alpha}{2}=p-1$ is a nonnegative integer, then the corresponding real kernel $\alpha-$harmonic mapping of $v(z)$ can be represented by
  \begin{align}\label{2.1}
  u(z)=\sum_{n=0}^{p-1}|z|^{2n}\frac{(1-p)_{n}}{n!}(I_{n}+\overline{J_{n}}),
  \end{align}
where $I_{n}$ and $J_{n}$ satisfy the    recurrence formulas
  \begin{align}\label{2.2}&I_{n}=I_{n-1}-p\frac{\int_{0}^{z}z^{n-1}I_{n-1}dz}{z^{n}},\\
  \label{2.3}&J_{n}=J_{n-1}-p\frac{\int_{0}^{z}z^{n-1}J_{n-1}dz}{z^{n}}\qquad n=1,2,...,p-1,
  \end{align}$I_{0}=h(z)$, and $J_{0}=g(z)$.
\end{theorem}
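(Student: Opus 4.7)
The plan is to derive the formula by starting from the homogeneous expansion of $u$ given in Theorem \ref{thm1.2}, specializing to the case $\alpha/2=p-1$, and reorganizing a double series, then verifying the recurrence directly from the Pochhammer symbols.

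First, I substitute $\alpha/2=p-1$ into (\ref{1.7}), so the hypergeometric factors become $F(1-p,\,k+1-p;\,k+1;\,|z|^{2})$. Because $1-p$ is a non-positive integer, the series terminates at $n=p-1$ and expands as
$$F(1-p,k+1-p;k+1;|z|^{2})=\sum_{n=0}^{p-1}\frac{(1-p)_{n}(k+1-p)_{n}}{(k+1)_{n}\,n!}|z|^{2n}.$$
Interchanging the (finite) sum over $n$ with the sums over $k$ in (\ref{1.7}), and writing $g(z)=\sum_{k\ge 1}\overline{c_{-k}}z^{k}$ so that the antiholomorphic part of $v$ is $\overline{g(z)}$, I obtain
$$u(z)=\sum_{n=0}^{p-1}\frac{(1-p)_{n}}{n!}|z|^{2n}\bigl(\widetilde I_{n}(z)+\overline{\widetilde J_{n}(z)}\bigr),$$
where I set $\widetilde I_{n}(z)=\sum_{k=0}^{\infty}\frac{(k+1-p)_{n}}{(k+1)_{n}}c_{k}z^{k}$ and $\widetilde J_{n}(z)=\sum_{k=1}^{\infty}\frac{(k+1-p)_{n}}{(k+1)_{n}}\overline{c_{-k}}z^{k}$. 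This already matches the shape of (\ref{2.1}), with $\widetilde I_{0}=h$ and $\widetilde J_{0}=g$.

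Second, I identify $\widetilde I_{n},\widetilde J_{n}$ with the $I_{n},J_{n}$ defined by the recurrences (\ref{2.2})--(\ref{2.3}). Setting $a_{k,n}:=(k+1-p)_{n}/(k+1)_{n}$, the standard Pochhammer identity gives
$$a_{k,n}=a_{k,n-1}\cdot\frac{k+n-p}{k+n}=a_{k,n-1}-\frac{p}{k+n}a_{k,n-1}.$$
On the other hand, termwise integration yields $\int_{0}^{z}\zeta^{n-1}\widetilde I_{n-1}(\zeta)\,d\zeta=\sum_{k}\frac{a_{k,n-1}}{k+n}c_{k}z^{k+n}$, so
$$\widetilde I_{n-1}(z)-p\,\frac{\int_{0}^{z}\zeta^{n-1}\widetilde I_{n-1}(\zeta)\,d\zeta}{z^{n}}=\sum_{k}a_{k,n-1}\Bigl(1-\frac{p}{k+n}\Bigr)c_{k}z^{k}=\widetilde I_{n}(z),$$
which is exactly (\ref{2.2}); the same computation with $\overline{c_{-k}}$ in place of $c_{k}$ establishes (\ref{2.3}). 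Induction on $n$ then identifies $\widetilde I_{n}=I_{n}$ and $\widetilde J_{n}=J_{n}$, completing the proof.

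The work is essentially bookkeeping once the terminating series is in hand; the only subtleties are (i) correctly tracking which power series is holomorphic versus antiholomorphic (so that $J_{0}=g$, not $\bar g$, and the conjugate appears on $\widetilde J_{n}$), and (ii) making sense of the formal division by $z^{n}$ in (\ref{2.2})--(\ref{2.3}), which is legitimate because $\zeta^{n-1}I_{n-1}(\zeta)$ is holomorphic at the origin and its antiderivative from $0$ vanishes to order $n$ there. Convergence is inherited from Theorem \ref{thm1.2}, since rearranging a finite $n$-sum of power series causes no difficulty.
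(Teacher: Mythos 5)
Your proof is correct and follows essentially the same route as the paper's: truncate the hypergeometric series at $n=p-1$ using $(1-p)_{n}=0$ for $n\geq p$, interchange the finite sum over $n$ with the sums over $k$, and verify the recurrence via the Pochhammer identity $\frac{(k+1-p)_{n}}{(k+1)_{n}}=\frac{(k+1-p)_{n-1}}{(k+1)_{n-1}}\bigl(1-\frac{p}{k+n}\bigr)$ together with termwise integration. Your added remarks on the holomorphic/antiholomorphic bookkeeping and on why the division by $z^{n}$ is harmless are correct refinements of details the paper leaves implicit.
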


\begin{proof}
Let $H(z)=\sum_{k=0}^{\infty}c_{k}F(-\frac{\alpha}{2}, k-\frac{\alpha}{2}; k+1; |z|^{2})z^{k}$ and $G(z)=\sum_{k=1}^{\infty}\overline{c_{-k}}F(-\frac{\alpha}{2}, k-\frac{\alpha}{2}; k+1; |z|^{2})z^{k}$. Then by the assumption and (\ref{1.7}), we have
\begin{align}\label{2.4}
 u(z)=H(z)+\overline{G(z)}.
 \end{align}
When $\frac{\alpha}{2}=p-1$, rewrite $H(z)$ as
\begin{align}\label{2.5}
&H(z)=\sum_{k=0}^{\infty}c_{k}F(1-p,k+1-p; k+1; |z|^{2})z^{k}\nonumber\\
  &\qquad\,=\sum_{k=0}^{\infty}c_{k}z^{k}\left(\sum_{n=0}^{\infty}\frac{(1-p)_{n}(k+1-p)_{n}}{(k+1)_{n}}\frac{|z|^{2n}}{n!}\right)\nonumber\\
  &\qquad\,=\sum_{k=0}^{\infty}c_{k}z^{k}\left(\sum_{n=0}^{p-1}\frac{(1-p)_{n}(k+1-p)_{n}}{(k+1)_{n}}\frac{|z|^{2n}}{n!}\right)\nonumber\\
  &\qquad\,=\sum_{n=0}^{p-1}|z|^{2n}\frac{(1-p)_{n}}{n!}I_{n},
  \end{align}
where $$I_{n}=\sum_{k=0}^{\infty}\frac{(k+1-p)_{n}}{(k+1)_{n}}c_{k}z^{k}.$$
Because
\begin{align*}&\frac{(k+1-p)_{n}}{(k+1)_{n}}=\frac{(k+1-p)_{n-1}(k+n-p)}{(k+1)_{n-1}(k+n)}\\
&\qquad\qquad\quad\,\,\,=\frac{(k+1-p)_{n-1}}{(k+1)_{n-1}}-\frac{p}{k+n}\frac{(k+1-p)_{n-1}}{(k+1)_{n-1}},
  \end{align*}
  we can get
  \begin{align*}&I_{n}=\sum_{k=0}^{\infty}\frac{(k+1-p)_{n-1}}{(k+1)_{n-1}}c_{k}z^{k}-\sum_{k=0}^{\infty}\frac{p}{k+n}\frac{(k+1-p)_{n-1}}{(k+1)_{n-1}}c_{k}z^{k}\\
  &\quad=I_{n-1}-p\frac{\int_{0}^{z}z^{n-1}I_{n-1}dz}{z^{n}}.
  \end{align*} This is (\ref{2.2}).

  Similarly, we can get
  \begin{align}\label{2.6}
  G(z)=\sum_{n=0}^{p-1}|z|^{2n}\frac{(1-p)_{n}}{n!}J_{n},
  \end{align}where $J_{n}$ is defined as in
  (\ref{2.3}). Therefore, equation (\ref{2.1}) follows from equations (\ref{2.4})-(\ref{2.6}).
\end{proof}

\begin{example}
From the recurrence formula (\ref{2.1}), we have the following:

(i)\, When $\alpha=0$, i.e. $p=1$,
\begin{align*}u(z)=v(z);
  \end{align*}

(ii)\, When $\alpha=2$, i.e. $p=2$,
\begin{align}\label{2.7}
&u(z)=h+\bar{g}-|z|^{2}\left(h-2\frac{\int_{0}^{z}h(z)dz}{z}+\overline{g-2\frac{\int_{0}^{z}g(z)dz}{z}}\right)\nonumber\\
&\qquad=\sum_{k=0}^{\infty}c_{k}z^{k}+\sum_{k=1}^{\infty}c_{-k}\bar{z}^{k}-|z|^{2}\left(\sum_{k=0}^{\infty}c_{k}\frac{k-1}{k+1}z^{k}+\sum_{k=1}^{\infty}c_{-k}\frac{k-1}{k+1}\bar{z}^{k}\right);
  \end{align}

(iii) \,When $\alpha=4$, i.e. $p=3$,
\begin{align*}&u(z)=h+\bar{g}-2|z|^{2}\left(h-3\frac{\int_{0}^{z}h(z)dz}{z}+\overline{g-3\frac{\int_{0}^{z}g(z)dz}{z}}\right)\\
&\qquad\quad+|z|^{4}\left(h-3\frac{\int_{0}^{z}h(z)dz}{z}-3\frac{\int_{0}^{z}zh(z)dz}{z^{2}}+9\frac{\int_{0}^{z}\int_{0}^{z}h(z)dzdz}{z^{2}}\right.\\
&\qquad\quad\left.+\overline{g-3\frac{\int_{0}^{z}g(z)dz}{z}-3\frac{\int_{0}^{z}zg(z)dz}{z^{2}}+9\frac{\int_{0}^{z}\int_{0}^{z}g(z)dzdz}{z^{2}}}\right)\\
  &\qquad=\sum_{k=0}^{\infty}c_{k}z^{k}+\sum_{k=1}^{\infty}c_{-k}\bar{z}^{k}-2|z|^{2}\left(\sum_{k=0}^{\infty}c_{k}\frac{k-2}{k+1}z^{k}+\sum_{k=1}^{\infty}c_{-k}\frac{k-2}{k+1}\bar{z}^{k}\right)\\
   &\quad\qquad+|z|^{4}\left(\sum_{k=0}^{\infty}c_{k}\frac{(k-1)(k-2)}{(k+1)(k+2)}z^{k}
   +\sum_{k=1}^{\infty}c_{-k}\frac{(k-1)(k-2)}{(k+1)(k+2)}\bar{z}^{k}\right).
  \end{align*}

\end{example}

\section{Lipschitz continuity}
\setcounter{equation}{0}
\hspace{2mm}

\begin{theorem}\label{thm 3.1}
Let $u(z)$ be the corresponding real kernel $\alpha-$harmonic mapping  of $v(z)=h+\bar{g}$ on the unit disk $\mathbb{D}$. If $v(z)$ is Lipschitz continuous on the unit disk $\mathbb{D}$ and $\frac{\alpha}{2}=p-1$ is a nonnegative integer, then $u$ is Lipschitz continuous on the unit disk $\mathbb{D}$ as well.
\end{theorem}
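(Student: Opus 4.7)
The plan is to leverage the explicit finite-sum representation from Theorem~\ref{thm 2.1}:
\begin{equation*}
u(z) = \sum_{n=0}^{p-1} |z|^{2n}\, \frac{(1-p)_n}{n!}\, \bigl(I_n(z) + \overline{J_n(z)}\bigr),
\end{equation*}
and reduce the Lipschitz continuity of $u$ on $\mathbb{D}$ to the Lipschitz continuity of each $I_n$ and $J_n$. Since the sum has only finitely many terms and $|z|^{2n}$ is bounded and Lipschitz on $\mathbb{D}$, a routine product-and-sum argument will then conclude.

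First I would pass the Lipschitz hypothesis on $v$ down to its analytic and co-analytic parts. Writing $v = h + \overline{g}$ with $v$ harmonic gives $v_z = h'$ and $v_{\bar z} = \overline{g'}$, so Lipschitz continuity of $v$ on the convex domain $\mathbb{D}$ is equivalent to $\sup_{z\in\mathbb{D}}\bigl(|v_z(z)|+|v_{\bar z}(z)|\bigr) < \infty$, forcing $\sup_{\mathbb{D}}|h'| < \infty$ and $\sup_{\mathbb{D}}|g'| < \infty$. For an analytic function on a convex domain, a uniform bound on the derivative is equivalent to Lipschitz continuity, and also yields boundedness via $h(z) = h(0) + \int_0^z h'(w)\,dw$; so $h$ and $g$ are both bounded and Lipschitz on $\mathbb{D}$.

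The technical heart of the argument is to show that the integral operator
\begin{equation*}
\mathcal{T}_n[f](z) := \frac{\int_0^z w^{n-1} f(w)\,dw}{z^n}
\end{equation*}
appearing in the recurrence (\ref{2.2}) preserves the class of analytic functions on $\mathbb{D}$ with uniformly bounded derivative. The substitution $w = tz$ yields the equivalent representation
\begin{equation*}
\mathcal{T}_n[f](z) = \int_0^1 t^{n-1} f(tz)\,dt,
\end{equation*}
so differentiating under the integral sign gives $(\mathcal{T}_n[f])'(z) = \int_0^1 t^n f'(tz)\,dt$, whence
\begin{equation*}
\sup_{z\in\mathbb{D}}\bigl|(\mathcal{T}_n[f])'(z)\bigr| \le \frac{1}{n+1}\sup_{z\in\mathbb{D}}|f'(z)|.
\end{equation*}

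With this in hand, a straightforward induction using $I_0 = h$ and the recurrence (\ref{2.2}) shows that each $I_n$ is analytic on $\mathbb{D}$ with uniformly bounded derivative, hence Lipschitz and bounded on $\mathbb{D}$; the same argument applies to the $J_n$. Each summand $|z|^{2n}(I_n + \overline{J_n})$ in (\ref{2.1}) is then a product of bounded Lipschitz functions, and any finite linear combination of such products is Lipschitz on $\mathbb{D}$. The main obstacle one might anticipate, namely controlling how the integral operator in the recurrence interacts with Lipschitz regularity, dissolves cleanly under the substitution $w = tz$, which turns the required derivative bound into an elementary computation.
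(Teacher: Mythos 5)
Your proposal is correct and follows essentially the same route as the paper: both reduce the problem via the representation (\ref{2.1}) to showing that each $I_n$ and $J_n$ has uniformly bounded derivative, propagated through the recurrence (\ref{2.2}). The only real difference is technical and in your favor: your substitution $w=tz$ gives the clean contraction bound $\sup_{\mathbb{D}}|(\mathcal{T}_n[f])'|\le \tfrac{1}{n+1}\sup_{\mathbb{D}}|f'|$ (and you correctly derive boundedness of $h$ from $h(z)=h(0)+\int_0^z h'$), whereas the paper differentiates the quotient directly and uses the maximum modulus principle with cruder constants.
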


\begin{proof}
By  the assumption and (\ref{2.1}), it is sufficient to prove that $I_{n} $ and $J_{n}$ are  Lipschitz continuous on the unit disk $\mathbb{D}$  for $n=0, 1,2, ...,p-1$. In the following, we just prove the  Lipschitz continuity of $I_{n}$. The case of $J_{n}$ is similar.

Observe that $I_{0}=h(z)$ is holomorphic on $\mathbb{D}$. Then by the recurrence formula (\ref{2.2}), it is easy to see  that all $I_{n}$ are holomorphic on $\mathbb{D}$. It follows that all $I'_{n}$ are holomorphic on  $\mathbb{D}$ too,
where
 \begin{align}\label{3.1}
 I'_{n}=I'_{n-1}-p\frac{z^{n}I_{n-1}-n\int_{0}^{z}z^{n-1}I_{n-1}dz}{z^{n+1}}, \qquad n=1,2,...,p-1.
  \end{align}
Taking account of the maximum modulus principle of holomorphic functions, from equations (\ref{2.2}) and (\ref{3.1}), we get

\begin{align*}&\sup_{z\in\mathbb{D}}|I_{n}|\leq \sup_{z\in\mathbb{D}}|I_{n-1}|+p \sup_{z\in\mathbb{D}}|I_{n-1}|=(p+1) \sup_{z\in\mathbb{D}}|I_{n-1}|
\end{align*}and
\begin{align*}
&\sup_{z\in\mathbb{D}}|I'_{n}|\leq \sup_{z\in\mathbb{D}}|I'_{n-1}|+p \sup_{z\in\mathbb{D}}|I_{n-1}|+np\sup_{z\in\mathbb{D}}|I_{n-1}|
=\sup_{z\in\mathbb{D}}|I'_{n-1}|+(n+1)p\sup_{z\in\mathbb{D}}|I_{n-1}|,
  \end{align*}respectively.
  It follows  that
  \begin{align*}&\sup_{z\in\mathbb{D}}|I_{n}|\leq (p+1)^{n} \sup_{z\in\mathbb{D}}|I_{0}|\end{align*}and
\begin{align}&\sup_{z\in\mathbb{D}}|I'_{n}|\leq \sup_{z\in\mathbb{D}}|I'_{n-1}|+(n+1)p\sup_{z\in\mathbb{D}}|I_{n-1}|\nonumber\\
&\quad\qquad\leq \sup_{z\in\mathbb{D}}|I'_{n-2}|+np\sup_{z\in\mathbb{D}}|I_{n-2}|+(n+1)p\sup_{z\in\mathbb{D}}|I_{n-1}|\nonumber\\
&\quad\qquad\leq ...\nonumber\\
&\quad\qquad\leq \sup_{z\in\mathbb{D}}|I'_{0}|+p\sum_{i=1}^{n}(i+1)\sup_{z\in\mathbb{D}}|I_{i-1}|\nonumber\\
&\label{3.2}\quad\qquad\leq \sup_{z\in\mathbb{D}}|I'_{0}|+p\sum_{i=1}^{n}(i+1)(p+1)^{i-1}\sup_{z\in\mathbb{D}}|I_{0}|.
  \end{align}
 Because $v=h+\bar{g}$ is Lipschitz, there exists a constant $M$ such that
  \begin{align}\label{3.3}
|h'|=|I'_{0}|\leq M
\end{align}for $z\in\mathbb{D}$. It follows that
  \begin{align}\label{3.4}
\sup_{z\in\mathbb{D}}|I_{0}|=\sup_{z\in\mathbb{D}}|h|\leq M.
\end{align}
Therefore, by  inequalities (\ref{3.2})-(\ref{3.4}),  we get that there exists a constant $C=C(M, p, n)$,
such that    \begin{align*}
&\sup_{z\in\mathbb{D}}|I'_{n}|\leq (1+p\sum_{i=1}^{n}(i+1)(p+1)^{i-1})M=:C
\end{align*}for $n=1,2,...,p-1$. It means that $I_{n}$ is Lipschitz continuous on $\mathbb{D}$.

\end{proof}

\section{Univalency of a subclass of  real kernel $\alpha-$harmonic mappings}
\setcounter{equation}{0}
\hspace{2mm}

In the rest of this paper, we use the following notations.
Let $\alpha>-1$, $z=re^{i\theta}$, and
\begin{align*}&t=|z|^{2}=r^{2},\\
&F=F_{k}=F(-\frac{\alpha}{2},k-\frac{\alpha}{2};k+1;t),\quad k=1,2,....,\\
&F_{t}=F_{k,t}=F_{k,t}(-\frac{\alpha}{2},k-\frac{\alpha}{2};k+1;t)=\frac{dF_{k}}{dt}=\frac{dF}{dt}.
 \end{align*}
Furthermore, let \begin{align*}
F_{k}(1)=\lim_{t\rightarrow 1^{-}}F(-\frac{\alpha}{2},k-\frac{\alpha}{2};k+1;t).
\end{align*}
Then by (\ref{1.6}), we have \begin{align}\label{4.1}
F_{k}(1)=\frac{\Gamma(k+1)\Gamma(1+\alpha)}{\Gamma(k+1+\frac{\alpha}{2})\Gamma(1+\frac{\alpha}{2})}.
\end{align}

\begin{lemma}\label{lem4.1}
Let $r_{n}$ and $s_{n}$\, $(n=0,1,2,...)$ be real numbers, and let the power series $$R(x)=\sum_{n=0}^{\infty}r_{n}x^{n} \quad \mbox{and}\quad S(x)=\sum_{n=0}^{\infty}s_{n}x^{n}$$
be convergent for $|x|<r$, $(r>0)$ with $s_{n}>0$ for all $n$. If the non-constant sequence $\{r_{n}/s_{n}\}$ is  increasing (decreasing) for all $n$, then the function $x\mapsto R(x)/S(x)$ is strictly increasing (resp. decreasing) on $(0,r)$.
\end{lemma}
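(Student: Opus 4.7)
\medskip

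\noindent\textbf{Proof proposal.} The plan is to reduce the monotonicity of $R/S$ to the positivity of $R'S-RS'$ on $(0,r)$, and then to identify $R'S-RS'$ with a power series whose coefficients are manifestly non-negative (and not all zero) once the monotonicity hypothesis on $\{r_n/s_n\}$ is imposed. Since $S(x)=\sum s_n x^n$ with $s_n>0$ satisfies $S(x)>0$ for $x\in(0,r)$, the sign of $(R/S)'(x)$ coincides with the sign of $R'(x)S(x)-R(x)S'(x)$, so it suffices to show the latter is strictly positive on $(0,r)$.

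The key step is to expand
\[
R'(x)S(x)-R(x)S'(x)=\sum_{n\ge 1,\,m\ge 0}n\,r_n s_m\,x^{n+m-1}-\sum_{n\ge 0,\,m\ge 1}m\,r_n s_m\,x^{n+m-1},
\]
and then symmetrize. After swapping dummy indices in the second sum I would write
\[
R'(x)S(x)-R(x)S'(x)=\sum_{n\ge 1,\,m\ge 0}n\,(r_n s_m-r_m s_n)\,x^{n+m-1}.
\]
Separating the contribution of $m=0$ and pairing each remaining $(n,m)$ with $n>m\ge 1$ against $(m,n)$, the cross terms combine cleanly, and factoring out $s_ns_m>0$ I would arrive at
\[
R'(x)S(x)-R(x)S'(x)=\sum_{n\ge 1}n\,s_n s_0\!\left(\frac{r_n}{s_n}-\frac{r_0}{s_0}\right)x^{n-1}+\sum_{n>m\ge 1}(n-m)\,s_n s_m\!\left(\frac{r_n}{s_n}-\frac{r_m}{s_m}\right)x^{n+m-1}.
\]
By the monotonicity assumption, every coefficient in both series is $\ge 0$, and because $\{r_n/s_n\}$ is non-constant there exist indices $n>m\ge 0$ with strict inequality $r_n/s_n>r_m/s_m$. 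Thus the corresponding coefficient is strictly positive, which forces $R'(x)S(x)-R(x)S'(x)>0$ for every $x\in(0,r)$, proving strict monotonic increase. The decreasing case is symmetric (or obtained by replacing $R$ with $-R$).

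The main obstacle I anticipate is purely bookkeeping: carefully justifying the rearrangement of the double series (which is permitted by absolute convergence of $R'$ and $S$ on any compact subinterval of $(0,r)$, together with the Cauchy product), and ensuring that the ``non-constant'' hypothesis is strong enough to upgrade the inequality from $\ge 0$ to strict positivity at every $x\in(0,r)$—this follows because even a single strictly positive coefficient in a power series with non-negative coefficients yields a strictly positive value at every positive $x$ in the domain of convergence.
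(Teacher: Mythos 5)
Your proposal is correct, but note that the paper itself offers no proof of Lemma \ref{lem4.1}: it simply attributes the result to Biernacki--Krzy\.{z} \cite{BK} (see also \cite{MR3327005}) and to Ponnusamy--Vuorinen \cite{PV}, so there is no in-paper argument to compare against. What you have written is essentially the classical proof from those sources, and it is complete: positivity of $S$ on $(0,r)$ reduces the problem to the sign of $R'S-RS'$; the symmetrization $R'S-RS'=\sum_{n\ge 1,m\ge 0}n\,(r_ns_m-r_ms_n)x^{n+m-1}$ is legitimate by absolute convergence on compact subintervals; pairing $(n,m)$ with $(m,n)$ kills the diagonal and yields the nonnegative coefficients $(n-m)s_ns_m(r_n/s_n-r_m/s_m)$ for $n>m$; and the non-constancy hypothesis supplies at least one strictly positive term, which, since $x^{k}>0$ for $x\in(0,r)$ and all terms are nonnegative, gives strict positivity of the whole sum. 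The reduction of the decreasing case to the increasing case via $R\mapsto -R$ is also fine. The only point worth making explicit in a final write-up is the one you already flag: the identification of a specific pair $n>m\ge 0$ with $r_n/s_n>r_m/s_m$ (which exists precisely because an increasing sequence that is non-constant must increase strictly somewhere), so that the corresponding monomial contributes a strictly positive amount at every $x\in(0,r)$. In short, your argument supplies, correctly and in an elementary self-contained way, the proof that the paper delegates to the literature.
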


Lemma \ref{lem4.1} is basically due to \cite{BK} (see also \cite{MR3327005}) and in this form with a general setting was stated in \cite{PV}
along with many applications which were later adopted by a number of researchers.

\begin{lemma}\cite{MR4365391} \label{lem4.2}
 Let $\frac{\alpha}{2}\in(0,1]$. Then  it holds that
 \begin{align*}
&(1)\quad \frac{F_{k}}{F_{1}}\leq1 \,\, \mbox{for} \,\, k=1,2,3,... \,\,\mbox{and} \,\,t\in[0,1);\\
&(2)\quad \frac{|F_{k,t}|}{F_{1}}<\frac{(k-\frac{\alpha}{2})\Gamma(k+1)\Gamma(2+\frac{\alpha}{2})}{2\Gamma(k+1+\frac{\alpha}{2})}\,\, \mbox{for}\,\, k=1, 2,3,...\mbox{and}\,\,t\in(0,1).
\end{align*}
\end{lemma}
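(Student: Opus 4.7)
The plan is to handle the two parts separately, both using the standard series representation of the Gauss hypergeometric function together with Lemma 1.1.

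For part (1), I would write
\[
F_{k}(t) = 1 + \sum_{n=1}^{\infty}\frac{(-\alpha/2)_{n}(k-\alpha/2)_{n}}{(k+1)_{n}\, n!}\, t^{n}
\]
and compare the coefficients of $F_{k}$ and $F_{1}$ termwise. Since $\alpha/2\in(0,1]$, every factor $j-\alpha/2$ for $j\ge 1$ is non-negative, so $(-\alpha/2)_{n}\le 0$ for all $n\ge 1$. The ratio that distinguishes $F_{k}$ from $F_{1}$ can be written as a telescoping product
\[
\frac{(k-\alpha/2)_{n}}{(k+1)_{n}}=\prod_{j=0}^{n-1}\left(1-\frac{1+\alpha/2}{k+1+j}\right),
\]
whose factors are non-negative and increasing in $k$. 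Multiplying by the non-positive quantity $(-\alpha/2)_{n}/n!$ flips the monotonicity, so every coefficient of $t^{n}$ in $F_{k}(t)$ is (weakly) decreasing in $k\ge 1$. Summing then gives $F_{k}(t)\le F_{1}(t)$ for all $t\in[0,1)$ and $k\ge 1$.

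For part (2), the approach is to reduce $F_{k,t}$ to a hypergeometric function using (\ref{1.5}), namely
\[
F_{k,t}=\frac{(-\alpha/2)(k-\alpha/2)}{k+1}\, F\!\left(1-\tfrac{\alpha}{2},\,k+1-\tfrac{\alpha}{2};\,k+2;\,t\right),
\]
so $|F_{k,t}|=\frac{(\alpha/2)(k-\alpha/2)}{k+1}\, F(1-\alpha/2,k+1-\alpha/2;k+2;t)$. The hypergeometric function on the right has parameters $a,b\ge 0$ with $ab\ge 0$ and $a,b\le c$, so Lemma \ref{lem1.1} gives that it is increasing on $(0,1)$ and hence bounded above by its boundary value, which is computable from (\ref{1.6}) because $c-a-b=\alpha>0$. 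Meanwhile, $F_{1}(t)=F(-\alpha/2,1-\alpha/2;2;t)$ satisfies the hypothesis $ab\le 0$ of Lemma \ref{lem1.1}, so it is decreasing on $(0,1)$ and hence bounded below by its limit at $t=1$, again computable from (\ref{1.6}).

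Assembling these two one-sided monotonic bounds,
\[
\frac{|F_{k,t}|}{F_{1}} \le \frac{(\alpha/2)(k-\alpha/2)}{k+1}\cdot\frac{\Gamma(k+2)\Gamma(\alpha)}{\Gamma(k+1+\alpha/2)\Gamma(1+\alpha/2)}\cdot\frac{\Gamma(2+\alpha/2)\Gamma(1+\alpha/2)}{\Gamma(1+\alpha)},
\]
and then using $\Gamma(k+2)=(k+1)\Gamma(k+1)$ together with $\Gamma(1+\alpha)=\alpha\,\Gamma(\alpha)$ collapses the right-hand side to exactly $\frac{(k-\alpha/2)\Gamma(k+1)\Gamma(2+\alpha/2)}{2\,\Gamma(k+1+\alpha/2)}$, which is the required bound.

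The main obstacle is part (2): a direct application of Lemma \ref{lem4.1} to the ratio $F_{k,t}/F_{1}$ is blocked because the denominator series $F_{1}$ has non-constant sign in $n$ when $\alpha/2\in(0,1)$, so one cannot invoke a quotient-monotonicity argument directly. The idea is therefore to replace this global ratio argument by separately bounding numerator from above and denominator from below at the boundary $t=1$, where both hypergeometric limits are explicit; the real check is that the Gamma-function arithmetic reproduces the claimed constant on the nose, which it does once the identities above are applied.
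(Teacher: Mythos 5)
The paper does not prove Lemma \ref{lem4.2} at all; it is quoted from \cite{MR4365391}, so there is no in-paper argument to compare against. Your proof is essentially correct and stands as a self-contained derivation from exactly the tools the paper does state (Lemma \ref{lem1.1} and equations (\ref{1.5}), (\ref{1.6})): the termwise coefficient comparison for part (1) and the derivative-as-hypergeometric reduction for part (2) both check out, and the Gamma-function arithmetic does collapse to the stated constant. Two small points you should tighten. First, in part (1) the coefficient comparison gives $F_k(t)\le F_1(t)$, but to pass to $F_k/F_1\le 1$ you also need $F_1(t)>0$ on $[0,1)$; this follows from the same monotonicity-plus-boundary-limit argument you use in part (2) ($F_1$ is decreasing by Lemma \ref{lem1.1} with positive limit $\Gamma(1+\alpha)/(\Gamma(2+\alpha/2)\Gamma(1+\alpha/2))$), so say it explicitly. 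Second, your chain of estimates yields $\le$ where the lemma asserts a strict inequality. For $\alpha/2\in(0,1)$ strictness is recoverable, since $F(1-\alpha/2,\,k+1-\alpha/2;\,k+2;\,t)$ is then strictly increasing and $F_1$ strictly decreasing, so at every interior $t$ at least one of your two one-sided bounds is strict. At the endpoint $\alpha/2=1$, however, equality genuinely occurs: there $F_1\equiv 1$ and $F_{k,t}\equiv -(k-1)/(k+1)$, which equals the claimed bound, so the strict form of the lemma as quoted is not attainable at that endpoint and your $\le$ version is the correct statement; this does not damage Theorem \ref{thm4.3}, whose estimate chain contains other strict inequalities.
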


 \begin{theorem}\label{thm4.3}
 If $\alpha\in(0,2]$, $c_{-k}\in (-N, N)$, where
 \begin{align}\label{4.5}
 N=\frac{\alpha}{2\left(\frac{(k-\frac{\alpha}{2})\Gamma(k+1)\Gamma(2+\frac{\alpha}{2})}
  {\Gamma(k+1+\frac{\alpha}{2})}+k\right)},
\end{align}then the real kernel $\alpha-$harmonic mapping

 \begin{align}\label{4.6}
 u(z)=F_{1}z+c_{-k}F_{k}\overline{z}^{k}, \quad k=1,2,3...,
\end{align} is sense-preserving univalent in $\mathbb{D}$.

 \end{theorem}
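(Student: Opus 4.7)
The plan is to prove both sense-preservation and univalence in a single stroke by establishing the pointwise strengthening
$\mathrm{Re}\,u_{z}(z) > |u_{\bar z}(z)|$
for every $z\in\mathbb{D}$. This immediately gives $|u_{z}|^{2}-|u_{\bar z}|^{2}>0$, so $u$ is sense-preserving. Univalence then follows by a standard convexity argument on $\mathbb{D}$: for distinct $z_{1},z_{2}\in\mathbb{D}$ with $\theta=\arg(z_{1}-z_{2})$, I would examine $\phi(s):=\mathrm{Re}\,[e^{-i\theta}(u(z_{1})-u(z_{2}+s(z_{1}-z_{2})))]$ on $s\in[0,1]$. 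A direct chain-rule computation yields $\phi'(s)=-|z_{1}-z_{2}|\,\mathrm{Re}[u_{z}+u_{\bar z}e^{-2i\theta}]\le -|z_{1}-z_{2}|(\mathrm{Re}\,u_{z}-|u_{\bar z}|)<0$; together with $\phi(1)=0$ this forces $\phi(0)>0$ and hence $u(z_{1})\neq u(z_{2})$.

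Writing $t=|z|^{2}$ and differentiating the defining formula yields
$$u_{z}=F_{1}+tF_{1,t}+c_{-k}F_{k,t}\bar z^{k+1},\qquad u_{\bar z}=z^{2}F_{1,t}+c_{-k}F_{k,t}z\bar z^{k}+k\,c_{-k}F_{k}\bar z^{k-1}.$$
Since $c_{-k}$ and $F_{k,t}$ are real, taking the real part of $u_{z}$ and applying the triangle inequality termwise to $u_{\bar z}$ gives
$$\mathrm{Re}\,u_{z}-|u_{\bar z}|\ge (F_{1}+tF_{1,t})-t|F_{1,t}|-2|c_{-k}||F_{k,t}||z|^{k+1}-k|c_{-k}|F_{k}|z|^{k-1}.$$

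The next move exploits that $F_{1,t}\le 0$ on $[0,1)$ whenever $\alpha\in(0,2]$; this is immediate from (\ref{1.5}) because the prefactor $(-\alpha/2)(1-\alpha/2)/2$ is non-positive while the hypergeometric factor is positive (alternatively, from Lemma \ref{lem1.1}). Hence $|F_{1,t}|=-F_{1,t}$ and the first two terms collapse to $F_{1}+2tF_{1,t}$. Applying Lemma \ref{lem4.2}(2) at index $1$ gives $|F_{1,t}|/F_{1}<(1-\alpha/2)/2$, so $F_{1}+2tF_{1,t}>F_{1}[1-t(1-\alpha/2)]>F_{1}\cdot\alpha/2$, using $t<1$ in the last step; the case $\alpha=2$ is trivial since $F_{1,t}\equiv 0$. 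Bounding the two remaining negative terms via Lemma \ref{lem4.2}(2) at the given index $k$ and Lemma \ref{lem4.2}(1) ($F_{k}\le F_{1}$), together with $|z|^{k\pm 1}<1$, produces
$$\mathrm{Re}\,u_{z}-|u_{\bar z}|>F_{1}\left[\frac{\alpha}{2}-|c_{-k}|\left(\frac{(k-\alpha/2)\Gamma(k+1)\Gamma(2+\alpha/2)}{\Gamma(k+1+\alpha/2)}+k\right)\right],$$
whose bracket is strictly positive precisely when $|c_{-k}|<N$.

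The main technical point is the coordinated use of Lemma \ref{lem4.2}(2) at two distinct indices: index $1$ to tame the ``diagonal'' $F_{1}$, $F_{1,t}$ terms coming from the analytic part $F_{1}z$ (not simply $z$), and index $k$ for the $\bar z^{k}$ contribution. The constant $N$ is engineered so that the bracket above vanishes exactly at $|c_{-k}|=N$; consequently every estimate in the chain must be essentially sharp, and no slack is available in the argument.
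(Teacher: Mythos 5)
Your proof is correct, and while the sense-preservation half coincides with the paper's, the univalence half takes a genuinely different route. For sense-preservation you derive exactly the same formulas for $u_{z}$ and $u_{\bar z}$, apply Lemma \ref{lem4.2} at indices $1$ and $k$ in the same way, and arrive at the same bracket $\frac{\alpha}{2}-|c_{-k}|\bigl(\frac{(k-\alpha/2)\Gamma(k+1)\Gamma(2+\alpha/2)}{\Gamma(k+1+\alpha/2)}+k\bigr)$; the only difference is that you keep track of $\mathrm{Re}\,u_{z}$ rather than $|u_{z}|$, which costs nothing because the dominant term $F_{1}+tF_{1,t}$ is real (and equals $F_{1}-t|F_{1,t}|$ once you note $F_{1,t}\le 0$), so the paper's lower bound for $|u_{z}|$ is already a lower bound for $\mathrm{Re}\,u_{z}$. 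Where you diverge is in passing from the Jacobian estimate to injectivity: the paper shows that $\arg u(re^{i\theta})$ is strictly increasing in $\theta$ on every circle $|z|=r$ (the $\cot\varphi$ analysis, with the case split $\beta=0$ or $\pi$ and the auxiliary bound $|c_{-k}|<1/k$) and then invokes the degree principle of \cite{MR1789408}, whereas you integrate $\mathrm{Re}\bigl[e^{-i\theta}(u_{z}+u_{\bar z}e^{-2i\theta})\bigr]$ along the segment joining two points --- a Noshiro--Warschawski-type argument that needs only the convexity of $\mathbb{D}$ and the pointwise inequality $\mathrm{Re}\,u_{z}>|u_{\bar z}|$. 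Your route is shorter, avoids the topological degree argument and the two-case discussion, and would work verbatim for complex $c_{-k}$ with $|c_{-k}|<N$; the paper's route is less tied to a distinguished direction and yields the extra geometric fact that $u$ maps circles one-to-one onto Jordan curves, which is reused for the Rad\'{o}--Kneser--Choquet discussion in Proposition \ref{pro4.6}. One small point to tidy up: Lemma \ref{lem4.2}(2) is stated only for $t\in(0,1)$, so your chain of inequalities needs a separate (trivial) word at $z=0$; the paper's proof has the same feature, so this is not a substantive gap.
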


 \begin{proof}

 We divide the proof into two steps.

 {\bf First step }: Formula (\ref{4.6}) implies that
 \begin{align*} u_{z}=F_{1}+F_{1,t}t+c_{-k}F_{k,t}\bar{z}^{k+1}, \quad u_{\bar{z}}=F_{1,t}z^{2}+c_{-k}(F_{k,t}z\bar{z}^{k}+kF_{k}\bar{z}^{k-1}).
 \end{align*}It follows  that
  \begin{align*}
 &\quad |u_{z}|-|u_{\bar{z}}|\\
 &\geq F_{1}-|F_{1,t}t|-|c_{-k}F_{k,t}\bar{z}^{k+1}|-|F_{1,t}z^{2}|-|c_{-k}F_{k,t}z\bar{z}^{k}|-k|c_{-k}F_{k}\bar{z}^{k-1}|\\
 &>F_{1}-|F_{1,t}|-|c_{-k}||F_{k,t}|-|F_{1,t}|-|c_{-k}||F_{k,t}|-k|c_{-k}||F_{k}|\\
 &=F_{1}\left[1-\frac{2|F_{1,t}|}{F_{1}}-|c_{-k}|\left(\frac{2|F_{k,t}|}{F_{1}}+k\frac{|F_{k}|}{F_{1}}\right)\right]\\
  &>F_{1}\left[1-(1-\frac{\alpha}{2})-|c_{-k}|\left(\frac{(k-\frac{\alpha}{2})\Gamma(k+1)\Gamma(2+\frac{\alpha}{2})}
  {\Gamma(k+1+\frac{\alpha}{2})}+k\right)\right]\\
  &>0
 \end{align*}for $c_{-k}\in (-N, N)$. The third inequality of  the above holds because of Lemma \ref{lem4.2}. Therefore, $u(z)$ is  sense-preserving.

{\bf Second step }: Let  $c_{-k}=|c_{-k}|e^{i\beta}$.  By assumption,  we have  $\beta=0$ or $\pi$. Let $z=re^{i\theta}$ and
$ u(z)=Re^{i\varphi}$. Rewrite $u(z)$ of (\ref{4.6})  as
\begin{align}&u(z)=F_{1}re^{i\theta}+|c_{-k}|F_{k}r^{k}e^{i(\beta-k\theta)}\nonumber\\
&\label{4.7}\qquad=F_{1}r\cos\theta+|c_{-k}|F_{k}r^{k}\cos (\beta-k\theta)+i(F_{1}r\sin\theta+|c_{-k}|F_{k}r^{k}\sin (\beta-k\theta)).
\end{align}
Then

\begin{align}\label{4.8}
\tan \varphi=\frac{F_{1}r\sin\theta+|c_{-k}|F_{k}r^{k}\sin (\beta-k\theta)}{F_{1}r\cos\theta+|c_{-k}|F_{k}r^{k}\cos (\beta-k\theta)},
\end{align}where $\varphi$ is the argument of $u(z)$. It follows that

\begin{align}
\frac{d}{d\theta}(\tan \varphi)&=\frac{d}{d\theta}\left(\frac{F_{1}r\sin\theta+|c_{-k}|F_{k}r^{k}\sin (\beta-k\theta)}{F_{1}r\cos\theta+|c_{-k}|F_{k}r^{k}\cos (\beta-k\theta)}\right)\nonumber\\
&=\frac{F_{1}^{2}-|c_{-k}|^{2}F_{k}^{2}r^{2(k-1)}k-(k-1)|c_{-k}|F_{1}F_{k}r^{k-1}\cos(\beta-(k+1)\theta)}{[F_{1}\cos\theta+|c_{-k}|F_{k}r^{k-1}\cos (\beta-k\theta)]^{2}}\nonumber\\
&\geq\frac{F_{1}^{2}-|c_{-k}|^{2}F_{k}^{2}r^{2(k-1)}k-(k-1)|c_{-k}|F_{1}F_{k}r^{k-1}}{[F_{1}\cos\theta+|c_{-k}|F_{k}r^{k-1}\cos (\beta-k\theta)]^{2}}\nonumber\\
&=\frac{(F_{1}+|c_{-k}|F_{k}r^{k-1})(F_{1}-|c_{-k}|kF_{k}r^{k-1})}{[F_{1}\cos\theta+|c_{-k}|F_{k}r^{k-1}\cos (\beta-k\theta)]^{2}}\nonumber\\
&\label{4.9}>0
\end{align}
for $|c_{-k}|<\frac{1}{k}$. The last inequality of the above holds because of Lemma \ref{lem4.2} (1). That is to say, $\tan\varphi$ is strictly increasing with respect to $\theta$. So is $\varphi$, too.

In the following we divide into two cases to discuss.

{\bf Case 1} $\beta=0$.  It follows from (\ref{4.8}) that
  \begin{align}\label{4.10}
  \cot\varphi=\frac{\cos\theta+|c_{-k}|\frac{F_{k}}{F_{1}}r^{k-1}\cos k\theta}{\sin\theta-|c_{-k}|\frac{F_{k}}{F_{1}}r^{k-1}\sin k\theta}.
  \end{align} Let's take a close look at the changes in the value of the function  $\cot\varphi$. Firstly, as is well-known, it is easy to verify by
mathematical induction that
\begin{align}\label{4.11}
\left|\frac{\sin k\theta}{\sin\theta}\right|\leq k
\end{align}for $k=1,2,...$ and $\theta\in[0,2\pi)$. If $|c_{-k}|<\frac{1}{k}$, $\alpha\in(0,2]$ and  $\sin\theta\neq 0$, then Lemma \ref{lem4.2} (1) and inequality (\ref{4.11}) imply that $|\sin\theta|>|c_{-k}|\frac{F_{k}}{F_{1}}r^{k-1}|\sin k\theta|$. So, $\sin\theta\neq 0$ implies  $\sin\theta-|c_{-k}|\frac{F_{k}}{F_{1}}r^{k-1}\sin k\theta\neq 0$. In another words, the zero of the denominator of  the right side of equation (\ref{4.10}) comes only from the zero of $\sin\theta$.
Secondly,  $\sin\theta$ only have two zeros in the intervals $[0, 2\pi)$. That is $\theta=0$ and $\pi$. By (\ref{4.10}), we have that
if $\theta=0^{+}$, then  $\cot\varphi=+\infty$; if $\theta=\pi^{-}$, then $\cot\varphi=-\infty$; if $\theta=\pi^{+}$, then $\cot\varphi=+\infty$;  if $\theta=2\pi^{-}$, then $\cot\varphi=-\infty$.
Therefore, considering the continuity and monotonicity of $\cot\varphi$, we can get that the $u(re^{i\theta})$ maps every
circle $|z|=r<1$ in a one-to-one manner onto a  closed Jordan curve.

{\bf Case 2} $\beta=\pi$. Considering (\ref{4.8}), we have
  \begin{align*}\cot\varphi=\frac{\cos\theta-|c_{-k}|\frac{F_{k}}{F_{1}}r^{k-1}\cos k\theta}{\sin\theta+|c_{-k}|\frac{F_{k}}{F_{1}}r^{k-1}\sin k\theta}.
  \end{align*} Follow the discussion of  Case 1. We omit the further details.

It is easy to see that $N<\frac{1}{k}$, where $N$ defined by (\ref{4.5}). Therefore, considering the above two steps of the proof, by degree principle\cite{MR1789408}, we can get that  $u(z)$ is univalent in $\mathbb{D}$.

\end{proof}

The  following  is the well known Rad\'{o}-Kneser-Choquet Theorem, which can be seen in the page 29 of  \cite{MR2048384}.

\begin{theorem}  \quad If $\Omega\in\mathbb{C}$ is a bounded convex domain whose boundary is a Jordan curve $\gamma$ and $f$ is a homeomorphism of the unit circle $\mathbb{T}$ onto $\gamma$, then its harmonic extension
\begin{align*}
u(z)=\frac{1}{2\pi}\int_{0}^{2\pi}\frac{1-|z|^{2}}{|e^{it}-z|^{2}}f(e^{it})dt
\end{align*}
is univalent in $\mathbb{D}$ and defines a harmonic mapping of $\mathbb{D}$ onto $\Omega$.
\end{theorem}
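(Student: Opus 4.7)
The plan is to establish the classical Rad\'o--Kneser--Choquet theorem by a three-step argument: a canonical decomposition, Choquet's lemma on the non-vanishing of the Jacobian, and a degree-theoretic conclusion.

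First I would write $u = h + \overline{g}$ with $h,g$ holomorphic on the simply connected disk $\mathbb{D}$ (this decomposition follows from the fact that $u_{z}$ and $u_{\bar z}$ are respectively holomorphic and antiholomorphic when $\triangle u = 0$). In terms of this decomposition the real Jacobian is $J_u = |h'|^2 - |g'|^2$. Since $f:\mathbb{T}\to\gamma$ is a homeomorphism, after possibly reversing orientation we may assume $f$ traces $\gamma$ counterclockwise with respect to $\Omega$. Two easy reductions will be useful: by the maximum principle applied to the harmonic function $z\mapsto \mathrm{Re}(e^{-i\alpha}u(z))$ for every $\alpha$, the image $u(\mathbb{D})$ is contained in the convex hull of $f(\mathbb{T}) = \gamma$, which is $\overline{\Omega}$; and by approximating $\Omega$ by a nested family of strictly convex smooth domains, we can assume $\gamma$ contains no non-trivial line segments (this handles the degenerate cases at the end by a limiting argument).

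The principal step---and the main obstacle---is Choquet's lemma: $J_u(z)\neq 0$ for every $z\in\mathbb{D}$. I would argue by contradiction, supposing $J_u(z_0)=0$. Then the real differential $du_{z_0}:\mathbb{R}^{2}\to\mathbb{C}$ has rank at most one, so its image lies in some line $e^{i\alpha}\mathbb{R}$. Form the real-valued harmonic function
\[
\phi(z) = \mathrm{Im}\bigl(e^{-i\alpha}(u(z) - u(z_0))\bigr).
\]
Then $\phi(z_0)=0$ and $\nabla\phi(z_0)=0$. Near a critical zero a non-constant real harmonic function behaves like $\mathrm{Re}\bigl(c(z-z_0)^{n}\bigr)$ for some $n\ge 2$, so its zero set near $z_0$ is the union of $2n \ge 4$ analytic arcs meeting at $z_0$. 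By the maximum principle these arcs cannot close up inside $\mathbb{D}$; hence they propagate to $\mathbb{T}$ and force $\phi|_{\mathbb{T}}$ to change sign at least four times. On the other hand, $\phi|_{\mathbb{T}}(e^{i\theta}) = \mathrm{Im}\bigl(e^{-i\alpha}(f(e^{i\theta})-u(z_0))\bigr)$ is the signed distance of $f(e^{i\theta})$ from the straight line $L = \{w:\mathrm{Im}(e^{-i\alpha}(w-u(z_0)))=0\}$. Because $\Omega$ is convex (and, after the reduction above, strictly convex), $L$ meets $\gamma$ in at most two points, so $\phi|_{\mathbb{T}}$ changes sign at most twice, giving the required contradiction. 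This nodal-counting argument is the delicate part: extracting exactly $\ge 4$ boundary sign changes from a critical zero, and controlling the sign changes on the boundary via the convex geometry of $\Omega$, are the ingredients that make the proof work.

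Once $J_u>0$ throughout $\mathbb{D}$ is established, I would conclude univalence by a degree argument. Fix $w_0 \in \Omega$. For $r$ close to $1$, the curve $\theta\mapsto u(re^{i\theta})$ is a small perturbation of $f$, hence has winding number $1$ about $w_0$ by the Jordan curve theorem. Since $J_u>0$ on $\mathbb{D}$, the map $u$ is an orientation-preserving local diffeomorphism, so every point of $u^{-1}(w_0)\cap\{|z|<r\}$ is a regular point contributing $+1$ to the degree. Therefore $\#(u^{-1}(w_0)\cap\mathbb{D}) = 1$. Points $w_0\in\mathbb{C}\setminus\overline{\Omega}$ have winding number zero and, by the containment $u(\mathbb{D})\subset\overline{\Omega}$, have no preimage. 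This shows $u$ is a homeomorphism of $\mathbb{D}$ onto $\Omega$, and by construction it is a harmonic mapping, completing the proof.
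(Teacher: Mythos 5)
The paper does not actually prove this statement: it is quoted as the classical Rad\'o--Kneser--Choquet theorem with a pointer to page 29 of Duren's book \cite{MR2048384}, and is used only as motivation for Proposition 4.6. So there is no in-paper argument to compare against; what you have written is, in outline, exactly Choquet's proof as presented in that reference (canonical decomposition $u=h+\overline{g}$, Choquet's lemma that $J_{u}$ never vanishes via nodal-line counting against convexity, then a degree count), and the overall structure is sound.

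Two steps need repair or expansion. First, the reduction to strictly convex domains is both unnecessary and the weakest link as written. Convexity alone already bounds the sign changes of $\phi|_{\mathbb{T}}$ by two: the line $L$ either meets $\Omega$, in which case $L\cap\gamma$ consists of exactly the two endpoints of the segment $L\cap\overline{\Omega}$, or it misses $\Omega$, in which case $\gamma$ lies in one closed half-plane of $L$ and $\phi|_{\mathbb{T}}$ does not change sign at all; in every case $\{\phi|_{\mathbb{T}}>0\}$ and $\{\phi|_{\mathbb{T}}<0\}$ are each a single (possibly empty) arc because $f$ is a homeomorphism. By contrast, the limiting argument you invoke to pass from strictly convex to convex domains is a genuine gap as stated: a locally uniform limit of univalent harmonic maps need not be univalent without an additional lemma ruling out degeneration, so you would be importing a nontrivial result to avoid a difficulty that is not there. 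Second, the inference from ``the $2n\ge4$ nodal arcs reach $\mathbb{T}$'' to ``$\phi|_{\mathbb{T}}$ changes sign at least four times'' should be spelled out: the arcs cut $\mathbb{D}$ into at least four sectors on which $\phi$ alternates in sign, and the maximum principle applied to each sector forces $\phi$ to attain that sign somewhere on the sector's boundary arc in $\mathbb{T}$, yielding four boundary points in cyclic order with alternating signs. (You should also note that $\phi\not\equiv0$, since otherwise $u(\mathbb{D})$ would lie on a line, contradicting that its boundary values trace a Jordan curve.) With these two points tightened, your argument is the standard and correct proof of the theorem the paper cites.
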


Next, we want to explore the Rad\'{o}-Kneser-Choquet  type theorem  for  real kernel $\alpha-$harmonic mappings. We need the following Proposition at first.

\begin{proposition}\label{prop4.5}
 Suppose $\alpha>-1$ and $c_{-k}\in \mathbb{R}$. Let\begin{align}\label{4.12}
f(e^{i\theta})=F_{1}(1)e^{i\theta}+c_{-k}F_{k}(1)e^{-ik\theta},\quad k=1,2,3,....
\end{align}Then $f$ maps the unit circle $\mathbb{T}$ onto a convex Jordan curve if and only if $c_{-k}\in (-M, M)$, where
 \begin{align}\label{4.13}
 M= \frac{\Gamma(k+1+\frac{\alpha}{2})}{k^{2}\Gamma(k+1)\Gamma(2+\frac{\alpha}{2})}.\end{align}
\end{proposition}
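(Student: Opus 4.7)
The plan is to parametrize the image $f(\mathbb{T})$ in real coordinates and apply the standard smooth-convexity criterion: a closed $C^{2}$ curve $\theta\mapsto(x(\theta),y(\theta))$ bounds a strictly convex region, and is automatically a Jordan curve traced once monotonically, exactly when the signed determinant $D(\theta):=x'(\theta)y''(\theta)-y'(\theta)x''(\theta)$ keeps a constant strict sign on $[0,2\pi)$.

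First I would write
\[
x(\theta)=F_{1}(1)\cos\theta+c_{-k}F_{k}(1)\cos(k\theta),\qquad y(\theta)=F_{1}(1)\sin\theta-c_{-k}F_{k}(1)\sin(k\theta),
\]
differentiate twice, and form $D(\theta)$. Introducing the shorthand $A:=F_{1}(1)$ and $B:=c_{-k}F_{k}(1)$, the expansion of $x'y''-y'x''$ produces four cross terms; the $\sin\theta\sin k\theta$ and $\cos\theta\cos k\theta$ contributions collapse through the identity $\cos\theta\cos k\theta-\sin\theta\sin k\theta=\cos((k+1)\theta)$ into a single cosine, leaving the closed form
\[
D(\theta)=A^{2}-k^{3}B^{2}+k(k-1)\,AB\cos((k+1)\theta).
\]

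Since $\cos((k+1)\theta)$ attains $\pm 1$ on $[0,2\pi)$, the minimum of $D$ is $A^{2}-k^{3}B^{2}-k(k-1)|AB|$. Requiring this minimum to be positive is the quadratic inequality $k^{3}|B|^{2}+k(k-1)A|B|-A^{2}<0$ in $|B|$; here a small miracle occurs, namely $(k-1)^{2}+4k=(k+1)^{2}$, so the discriminant is a perfect square and the admissible range collapses to $|B|<A/k^{2}$. Substituting $A=F_{1}(1)$, $B=c_{-k}F_{k}(1)$ and using the explicit formula (\ref{4.1}) with $\Gamma(2)=1$ to simplify $F_{1}(1)/(k^{2}F_{k}(1))$ reproduces exactly the constant $M$ in (\ref{4.13}). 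Strict positivity of $D$ in turn forces the tangent direction to rotate strictly monotonically through $2\pi$, which gives injectivity of $f$ on $\mathbb{T}$ and strict convexity of the image, handling the sufficiency direction in one stroke.

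For necessity, if $|c_{-k}|>M$ the same extremum analysis shows $\min_{\theta}D(\theta)<0$ while the maximum remains positive, so $D$ changes sign, the tangent angle fails to be monotonic, and the curve has genuine inflection points and cannot bound a convex region. The delicate point, and the step I expect to be the main obstacle, is the boundary case $|c_{-k}|=M$: here $D\ge 0$ with equality at the $k+1$ solutions of $\cos((k+1)\theta)=-\mathrm{sgn}(AB)$, and one has to verify that $|f'(e^{i\theta})|$ does not vanish there (a direct computation gives $|f'|^{2}=(A+k|B|)^{2}>0$, so no cusp develops) and then argue that the loss of strict positivity of the curvature prevents $f(\mathbb{T})$ from being a convex Jordan curve in the strict sense required by the subsequent Rad\'o--Kneser--Choquet type application. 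Combining these three cases yields the open-interval characterization claimed in the proposition.
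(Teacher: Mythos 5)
Your proof is correct and follows essentially the same route as the paper: the quantity $D(\theta)=x'y''-y'x''$ you compute is exactly the numerator of the paper's $\frac{d}{d\theta}\tan\psi(\theta)$, and the reduction via $\cos((k+1)\theta)$ followed by the factorization $\bigl(F_{1}(1)+k|c_{-k}|F_{k}(1)\bigr)\bigl(F_{1}(1)-k^{2}|c_{-k}|F_{k}(1)\bigr)$ is the same key step. If anything, your explicit treatment of the necessity direction and of the boundary case $|c_{-k}|=M$ goes further than the paper, which stops at the curvature-sign criterion.
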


\begin{proof} Direct computation leads to
\begin{align*} \frac{d}{d\theta}(f(e^{i\theta}))=-F_{1}(1)\sin\theta-kc_{-k}F_{k}(1)\sin k\theta
+i(F_{1}(1)\cos\theta-kc_{-k}F_{k}(1)\cos k\theta).
\end{align*}Let $\psi=\psi(\theta)=\arg\{\frac{d}{d\theta}f(e^{i\theta})\}$. Then we have
\begin{align*}
\frac{d}{d\theta}(\tan \psi(\theta))&=\frac{(F_{1}(1))^{2}-k^{3}(c_{-k}F_{k}(1))^{2}+k(k-1)c_{-k}F_{1}(1)F_{k}(1)\cos((k+1)\theta)}{(F_{1}(1)\sin\theta+kc_{-k}F_{k}(1)\sin k\theta)^{2}}\\
&\geq\frac{(F_{1}(1))^{2}-k^{3}(c_{-k}F_{k}(1))^{2}-k(k-1)|c_{-k}|F_{1}(1)F_{k}(1))}{(F_{1}(1)\sin\theta+kc_{-k}F_{k}(1)\sin k\theta)^{2}}\\
&=\frac{(F_{1}(1)+k|c_{-k}|F_{k}(1))(F_{1}(1)-k^{2}|c_{-k}|F_{k}(1))}{(F_{1}(1)\sin\theta+kc_{-k}F_{k}(1)\sin k\theta)^{2}}
\end{align*}
Hence, $\frac{d}{d\theta}(\tan \psi(\theta))\geq 0$ if and only if $|c_{-k}|\leq \frac{F_{1}(1)}{k^{2}F_{k}(1)}= \frac{\Gamma(k+1+\frac{\alpha}{2})}{k^{2}\Gamma(k+1)\Gamma(2+\frac{\alpha}{2})}$.
\end{proof}

Now let $f(e^{i\theta})$ be defined as in (\ref{4.12}) with $\alpha\in (0,2]$, $c_{-k}\in (-L, L)$, where $L=\min\{M, N\}$.  Observe that $\lim_{r\rightarrow 1}u(z):=u^{*}(e^{i\theta})=f(e^{i\theta})$, where $u(z)$  are defined by (\ref{4.6}).
Similar to the second step of the proof of Theorem \ref{thm4.3}, we can verify that $f(e^{i\theta})$ maps unit circle $\mathbb{T}$ onto a  closed Jordan curve in a one-to-one manner, too.
Therefore, considering Theorem 3.3 of \cite{MR3233580} and Theorem \ref{thm4.3} of the above, we actually  get a Rad\'{o}-Kneser-Choquet  type theorem as follows:

 \begin{proposition}\label{pro4.6}
 Let $u^{*}(e^{i\theta})=f(e^{i\theta})$  be defined by (\ref{4.12}) with   $k=1,2,3,...$, $\alpha\in (0,2]$, $c_{-k}\in (-L, L)$, where $L=\min\{M, N\}$, $N$ and $M$ are defined by (\ref{4.5}) and (\ref{4.13}), respectively .  Then  $u^{*}(e^{i\theta})$ is a homeomorphism of the unit circle $\mathbb{T}$ onto a convex Jordan curve $\gamma$ which is a boundary of a bounded convex domain $\Omega\subset\mathbb{C}$. Furthermore,
 $u(z)$ defined by (1.3)  defines a  univalent real kernel $\alpha$-harmonic mapping of  $\mathbb{D}$ onto $\Omega$.
 \end{proposition}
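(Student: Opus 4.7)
The plan is to combine the convexity criterion of Proposition \ref{prop4.5} with the univalence criterion of Theorem \ref{thm4.3}, and then glue them together through the boundary behavior of the integral representation (\ref{1.3}). The choice $L=\min\{M,N\}$ is precisely what makes \emph{both} hypotheses simultaneously available, so the strategy splits naturally into a boundary part and an interior part.

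First I would establish that $f=u^{*}$ is a homeomorphism of $\mathbb{T}$ onto a convex Jordan curve $\gamma$. Since $|c_{-k}|<M$, Proposition \ref{prop4.5} gives $\frac{d}{d\theta}(\tan\psi(\theta))\ge 0$, so the argument of the tangent vector $\frac{d}{d\theta}f(e^{i\theta})$ is weakly monotone and the image curve $\gamma=f(\mathbb{T})$ is convex. To see that $f$ is injective, I would repeat the computation of $\cot\varphi$ done in the second step of the proof of Theorem \ref{thm4.3} but specialized to $r=1$: the condition $|c_{-k}|<N\le 1/k$ (observed at the end of Theorem \ref{thm4.3}) together with Lemma \ref{lem4.2}(1) still guarantees $|\sin\theta|>|c_{-k}|\frac{F_{k}(1)}{F_{1}(1)}|\sin k\theta|$, so the denominator of $\cot\varphi$ vanishes only at $\theta=0,\pi$, and one can trace $\theta$ through $(0,\pi)$ and $(\pi,2\pi)$ and verify $\cot\varphi$ runs monotonically from $+\infty$ to $-\infty$ on each interval. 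Thus $f$ winds around $\gamma$ exactly once and, being continuous, is a homeomorphism $\mathbb{T}\to\gamma$; hence $\gamma=\partial\Omega$ for a bounded convex domain $\Omega$.

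Next I would identify the Poisson-type extension of $f$ with the polynomial (\ref{4.6}). Directly, the two-term real kernel $\alpha$-harmonic mapping
\[
u(z)=F_{1}z+c_{-k}F_{k}\bar{z}^{k}
\]
satisfies $\lim_{r\to 1^{-}}u(re^{i\theta})=F_{1}(1)e^{i\theta}+c_{-k}F_{k}(1)e^{-ik\theta}=f(e^{i\theta})$ in $\mathfrak{D}'(\mathbb{T})$, so by Olofsson's uniqueness in the representation (\ref{1.3}) (Theorem 3.3 of \cite{MR3233580}), the function $u$ defined in (\ref{1.3}) with boundary datum $u^{*}=f$ is exactly this polynomial. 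Since $|c_{-k}|<N\le L$, Theorem \ref{thm4.3} applies and shows that $u$ is sense-preserving and univalent on $\mathbb{D}$.

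Finally, with $u$ continuous on $\overline{\mathbb{D}}$, sense-preserving and univalent on $\mathbb{D}$, and $u|_{\mathbb{T}}=f$ a homeomorphism onto the boundary $\partial\Omega$ of the convex domain $\Omega$, the degree-of-mapping argument invoked in the last line of the proof of Theorem \ref{thm4.3} (cf. \cite{MR1789408}) forces $u(\mathbb{D})=\Omega$. I expect the main obstacle to be only bookkeeping: namely, carefully carrying the $r<1$ monotonicity argument of Theorem \ref{thm4.3} through the boundary limit $r\to 1^{-}$ to conclude that $f$ itself (not merely the interior trace) is a simple closed curve, since Proposition \ref{prop4.5} supplies convexity but not injectivity a priori. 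Everything else is assembly of results already established in the paper.
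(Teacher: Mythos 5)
Your proposal is correct and follows essentially the same route as the paper: the paper likewise gets injectivity of $f$ on $\mathbb{T}$ by rerunning the second step of the proof of Theorem \ref{thm4.3} at $r=1$, convexity of $\gamma$ from Proposition \ref{prop4.5}, identifies the extension (\ref{1.3}) with the polynomial (\ref{4.6}) via Theorem 3.3 of \cite{MR3233580}, and concludes univalence and $u(\mathbb{D})=\Omega$ from Theorem \ref{thm4.3}. Only note the harmless slip ``$|c_{-k}|<N\le L$'', which should read $|c_{-k}|<L\le N$ since $L=\min\{M,N\}$.
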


Let us have a look at some special cases of Theorem \ref{thm4.3} or Proposition \ref{pro4.6}.

\begin{example}
Let $\alpha=2$. Then $M=\frac{k+1}{2k^{2}}$ and  $N=\frac{k+1}{k^{2}+3k-2}$. Formula
(\ref{4.12}) deduces to
 \begin{align*}f(e^{i\theta})=e^{i\theta}+\frac{2}{k+1}c_{-k}e^{-ik\theta}.
 \end{align*}Furthermore, let $u^{*}(e^{i\theta})=f(e^{i\theta})$. Then (\ref{1.3}),  or  (\ref{2.7}),  implies that the corresponding real kernel $\alpha$-harmonic mapping is
 \begin{align}\label{4.14}
 u(z)=F_{1}z+c_{-k}F_{k}\bar{z}^{k}=z+c_{-k}\left(1-\frac{k-1}{k+1}|z|^{2}\right)\bar{z}^{k}.
 \end{align} Actually,  it is biharmonic.

(1)\quad If $k=1$ or $k=2$, then $L=M=N$.  If $c_{-k}\in (-L, L)$, then Proposition \ref{pro4.6} says that the $u(z)$  given by (\ref{4.14}) is   univalent,  and $u(\mathbb{D})=\Omega$ is a convex domain.

(2)\quad  If $k=3,4,5,...$, then a direct computation leads to $N>M$. Taking $c_{-k}\in (M, N)$,  Theorem  \ref{thm4.3} and Proposition \ref{prop4.5} imply that  the above $u(z)$ is  still univalent, but $u(\mathbb{D})=\Omega$ is not a convex domain.
\end{example}

\section{Area $S_{u}$}
\setcounter{equation}{0}
\hspace{2mm}

Let $S_{u}$ denote the area of the Riemann surface of $u$. Then we have the following results.

 \begin{theorem}\label{thm5.1}
Let $u$  be a sense-preserving  real kernel $\alpha$-harmonic mapping that has the series expansion of the form (\ref{1.7}) with $c_{0}=0$,  continuous  on $\overline {\mathbb{D}}$. Let $v$ be the corresponding  sense-preserving harmonic mapping that has the series expansion of the form (\ref{1.9}),   continuous  on $\overline {\mathbb{D}}$.
 If  $|c_{k}|\geq|c_{-k}|$ for $k=1,2,...$, then

(1)\quad \, $S_{u}< S_{v}$ for $\alpha\in(0, 2)$ and
      $S_{u}> S_{v}$ for $\alpha\in (-1,0)$;

 (2) \quad \, $S_{u}(\alpha)$ is strictly decreasing with respect to $\alpha\in(-1,0.8)$, where $S_{u}(\alpha)=S_{u}$.

 \end{theorem}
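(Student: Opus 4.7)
My approach is to derive an explicit closed-form expression for $S_u$ in terms of the coefficients $\{c_k\}_{-\infty}^{\infty}$, then reduce both parts of the theorem to elementary properties of $F_k(1)$. Since $u$ is sense-preserving, $S_u=\iint_{\mathbb{D}}(|u_z|^2-|u_{\bar z}|^2)\,dA$. Differentiating the series (\ref{1.7}) in $z$ and $\bar z$ and writing $z=re^{i\theta}$, $t=r^{2}$, the term $c_kF_kz^k$ contributes to $u_z$ at angular frequency $(k-1)\theta$ with radial factor $r^{k-1}(kF_k+tF_{k,t})$ and to $u_{\bar z}$ at frequency $(k+1)\theta$ with radial factor $r^{k+1}F_{k,t}$; the $c_{-k}F_k\bar z^k$ terms behave symmetrically. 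The angular frequencies are pairwise distinct across the whole series, and the $k=0$ contribution cancels identically in $|u_z|^2-|u_{\bar z}|^2$, so $\theta$-integration annihilates every cross term. Within a fixed $k\ge 1$, the factorization
\begin{equation*}
(kF_k+tF_{k,t})^{2}-t^{2}F_{k,t}^{2}=kF_k(kF_k+2tF_{k,t})
\end{equation*}
combined with the product-rule identity $\tfrac{d}{dt}(t^{k}F_k^{2})=t^{k-1}F_k(kF_k+2tF_{k,t})$ telescopes the radial integral to $\tfrac{1}{2}F_k(1)^{2}$, giving
\begin{equation*}
S_u=\pi\sum_{k=1}^{\infty}k\bigl(|c_k|^{2}-|c_{-k}|^{2}\bigr)F_k(1)^{2},\qquad S_v=\pi\sum_{k=1}^{\infty}k\bigl(|c_k|^{2}-|c_{-k}|^{2}\bigr),
\end{equation*}
with every summand nonnegative by $|c_k|\ge|c_{-k}|$.

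Part (1) is then a termwise application of Lemma \ref{lem1.1} to $F_k(t)=F(-\alpha/2,k-\alpha/2;k+1;t)$. With $a=-\alpha/2$, $b=k-\alpha/2$, $c=k+1$ the hypotheses $a,b\le c$ hold for every $k\ge 1$ and $\alpha>-1$, while $ab=(-\alpha/2)(k-\alpha/2)$ has the sign of $-\alpha$ (since $k-\alpha/2>0$). For $\alpha\in(0,2)$ this is negative for all $k\ge 1$, so $F_k$ is strictly decreasing on $(0,1)$, $F_k(1)<F_k(0)=1$, and $S_u<S_v$; for $\alpha\in(-1,0)$ it is positive, $F_k$ is strictly increasing, $F_k(1)>1$, and $S_u>S_v$.

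For Part (2), I differentiate the explicit Gamma-function expression (\ref{4.1}) logarithmically to obtain
\begin{equation*}
\frac{d}{d\alpha}F_k(1)=F_k(1)\,g_k(\alpha),\qquad g_k(\alpha):=\psi(1+\alpha)-\tfrac{1}{2}\psi(k+1+\alpha/2)-\tfrac{1}{2}\psi(1+\alpha/2),
\end{equation*}
with $\psi=\Gamma'/\Gamma$ the digamma function, and hence
\begin{equation*}
\frac{d}{d\alpha}S_u(\alpha)=2\pi\sum_{k=1}^{\infty}k\bigl(|c_k|^{2}-|c_{-k}|^{2}\bigr)F_k(1)^{2}\,g_k(\alpha).
\end{equation*}
Since $\psi$ is strictly increasing, the sequence $g_k(\alpha)$ is strictly decreasing in $k$, so it suffices to show $g_1(\alpha)<0$ on $(-1,0.8)$. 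Using $\psi(2+\alpha/2)=\psi(1+\alpha/2)+\tfrac{2}{2+\alpha}$, this reduces to the single-variable digamma inequality
\begin{equation*}
\psi(1+\alpha)-\psi(1+\alpha/2)<\frac{1}{2+\alpha},\qquad \alpha\in(-1,0.8).
\end{equation*}

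Establishing this digamma inequality is the main obstacle. My plan is to prove it by showing that $g_1$ is strictly increasing on $(-1,0.8)$—expanding the trigamma via $\psi'(x)=\sum_{n\ge 0}(n+x)^{-2}$ reduces $g_1'(\alpha)>0$ to an elementary termwise positivity—together with the endpoint bound $g_1(0.8)\le 0$, which is the explicit numerical check $\psi(1.8)-\psi(1.4)\le 1/2.8$ and is precisely what pins down the constant $0.8$ appearing in the theorem. Monotonicity of $g_1$ combined with $g_1(0.8)\le 0$ then gives $g_1(\alpha)<0$ throughout $(-1,0.8)$, so $dS_u/d\alpha<0$ there and Part (2) follows.
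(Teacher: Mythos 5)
Your proposal is correct and its skeleton coincides with the paper's: both derive the same closed form $S_u=\pi\sum_{k\geq1}k\left(|c_k|^{2}-|c_{-k}|^{2}\right)F_k(1)^{2}$ by orthogonality of the distinct angular frequencies and the telescoping of the radial integrand into $\tfrac12\,d(F^{2}r^{2k})$; this is exactly the paper's display (\ref{5.1}) once (\ref{4.1}) is substituted. The one genuine divergence is in part (1): you compare $F_k(1)$ with $F_k(0)=1$ by applying Lemma \ref{lem1.1} to $F(-\alpha/2,k-\alpha/2;k+1;t)$, noting that $ab$ has the sign of $-\alpha$, whereas the paper compares the Gamma quotient $f(x)=\Gamma(1+\alpha)\Gamma(x+1)/\bigl(\Gamma(1+\tfrac{\alpha}{2})\Gamma(x+1+\tfrac{\alpha}{2})\bigr)$ at $x=k$ against its value $1$ at $x=\alpha/2$ via monotonicity of the digamma function. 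Your route is arguably more economical since Lemma \ref{lem1.1} is already available in the paper and its hypotheses are immediate for all $k\geq1$ and $\alpha\in(-1,2)$; you should just add one line noting that $F_k$ is non-constant (its derivative at $0$ is $ab/c\neq0$), so the monotonicity gives a strict inequality $F_k(1)\neq1$. For part (2), your $g_k(\alpha)$ is the logarithmic derivative the paper computes, your reduction "$g_k$ decreasing in $k$, so it suffices that $g_1<0$" matches the paper's estimate, and $g_1(\alpha)$ is literally the paper's $h(\alpha)$ after the recurrence $\psi(2+\tfrac{\alpha}{2})=\psi(1+\tfrac{\alpha}{2})+\tfrac{2}{2+\alpha}$. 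The final step you leave as a "plan" is exactly how the paper finishes and it does go through: the trigamma expansion gives $g_1'(\alpha)=\sum_{n\geq0}\bigl((n+1)^{2}-\tfrac{\alpha^{2}}{2}\bigr)/\bigl(2(1+\tfrac{\alpha}{2}+n)^{2}(1+\alpha+n)^{2}\bigr)+\tfrac{1}{(2+\alpha)^{2}}>0$ on $(-1,1)$, and the numerical check $h(0.8)\approx-0.0108<0$ closes the argument. The only loose end, shared with the paper, is that strict decrease of $S_u(\alpha)$ requires at least one summand with $|c_k|>|c_{-k}|$, which follows from $S_u>0$ for a sense-preserving $u$.
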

\begin{proof}

By  (\ref{1.7}),  direct computation leads to
 \begin{align*}
u_{z}
&=\sum_{k=1}^{\infty}c_{k}[F_{t}\bar{z}z^{k} +kFz^{k-1}]+\sum_{k=1}^{\infty}c_{-k}F_{t}\bar{z}^{k+1}\\
&=\sum_{k=1}^{\infty}c_{k}[F_{t}r^{k+1}+kFr^{k-1}]e^{i(k-1)\theta}+\sum_{k=1}^{\infty}c_{-k}F_{t}r^{k+1}e^{-i(k+1)\theta}\\
\end{align*}and
 \begin{align*}
u_{\bar{z}}
&=\sum_{k=1}^{\infty}c_{k}F_{t}z^{k+1}+\sum_{k=1}^{\infty}c_{-k}[F_{t}z\bar{z}^{k}+kF\bar{z}^{k-1}]\\
&=\sum_{k=1}^{\infty}c_{-k}[F_{t}r^{k+1}+kFr^{k-1}]e^{-i(k-1)\theta}+\sum_{k=1}^{\infty}c_{k}F_{t}r^{k+1}e^{i(k+1)\theta}.
\end{align*}
So,
\begin{align}
S_{u}&=\int_{0}^{2\pi}\int_{0}^{1}J_{u}(z)rdrd\theta\nonumber\\
&=2\pi\int_{0}^{1}\sum_{k=1}^{\infty}\left[|c_{k}(F_{t}r^{k+1}+kFr^{k-1})|^{2}+|c_{-k}F_{t}r^{k+1}|^{2}-|c_{-k}(F_{t}r^{k+1}+kFr^{k-1})|^{2}-|c_{k}F_{t}r^{k+1}|^{2}\right]rdr\nonumber\\
&=2\pi\int_{0}^{1}\left[\sum_{k=1}^{\infty}(|c_{k}|^{2}-|c_{-k}|^{2})(k^{2}F^{2}r^{2k-1}+2kFF_{t}r^{2k+1})\right]dr\nonumber\\
&=2\pi\sum_{k=1}^{\infty}\left[(|c_{k}|^{2}-|c_{-k}|^{2})k\int_{0}^{1}(kF^{2}r^{2k-1}+2FF_{t}r^{2k+1})dr\right]\nonumber\\
&=\pi\sum_{k=1}^{\infty}\left[(|c_{k}|^{2}-|c_{-k}|^{2})k\int_{0}^{1}d(F^{2}r^{2k})\right]\nonumber\\
\label{5.1}&=\pi\frac{\Gamma^{2}(1+\alpha)}{\Gamma^{2}(1+\frac{\alpha}{2})}\sum_{k=1}^{\infty}\left[k(|c_{k}|^{2}-|c_{-k}|^{2})\frac{\Gamma^{2}(k+1)}{\Gamma^{2}(k+1+\frac{\alpha}{2})}\right].
\end{align}
The last equality holds because of (\ref{1.6}).

Similarly, we have
\begin{align}\label{5.2}
S_{v}=\pi\sum_{k=1}^{\infty}k(|c_{k}|^{2}-|c_{-k}|^{2}).\end{align}

(1)\quad  Let $\psi(x)=\Gamma'(x)/\Gamma(x)$ be the digamma function. Then it is well known that(cf. [3]) $\psi(x)$ is  strictly increasing on $(0, +\infty)$.

Let \begin{align*}
f(x)=\frac{\Gamma(1+\alpha)\Gamma(x+1)}{\Gamma(1+\frac{\alpha}{2})\Gamma(x+1+\frac{\alpha}{2})}.
\end{align*}
Then  we have \begin{align*}
(\log f(x))' &=\psi(x+1)-\psi(x+1+\frac{\alpha}{2}).
\end{align*}
It follows that $(\log f(x))'<0$  provided $\alpha>0$, and  $(\log f(x))'>0$  provided  $\alpha<0$.
Observe that \begin{align*} f(\alpha/2)=1.
\end{align*}Therefore, for $k=1,2,...$, we have $f(k)<1$ if $\alpha\in (0,2)$  as well as $f(k)>1$ if $\alpha\in (-1,0)$. Taking account of (\ref{5.1}) and (\ref{5.2}), we can get Theorem \ref{thm5.1} (1).

(2)\quad As to digamma function $\psi(x)$, we have (cf. [3])
\begin{align}\label{5.3}&\psi(1+x)=\frac{1}{x}+\psi(x),\\
\label{5.4}&\psi(x)=-\gamma+\sum^{\infty}_{n=0}\left(\frac{1}{n+1}-\frac{1}{n+x}\right),
\end{align}and
\begin{align}\label{5.5}
&\psi'(x)=\sum^{\infty}_{n=0}\frac{1}{(x+n)^{2}}
\end{align}
for any $x\in(0, +\infty)$, where $\gamma$ is the Euler-Mascheroni constant.

Let $$h(\alpha)=\psi(1+\alpha)-\psi(1+\frac{\alpha}{2})-\frac{1}{2+\alpha}.$$Then (\ref{5.5}) implies that
\begin{align*}
&h'(\alpha)=\psi'(1+\alpha)-\frac{1}{2}\psi'(1+\frac{\alpha}{2})+\frac{1}{(2+\alpha)^{2}}\\
&\qquad\,=\sum^{\infty}_{n=0}\left(\frac{1}{(1+\alpha+n)^{2}}-\frac{1}{2(1+\frac{\alpha}{2}+n)^{2}}\right)+\frac{1}{(2+\alpha)^{2}}\\
&\qquad\,=\sum^{\infty}_{n=0}\frac{(n+1)^{2}-\frac{\alpha^{2}}{2}}{2(1+\frac{\alpha}{2}+n)^{2}(1+\alpha+n)^{2}}+\frac{1}{(2+\alpha)^{2}}\\
&\qquad\,>0
\end{align*}
for $\alpha\in(-1,1)$. Furthermore, using (\ref{5.4}), direct numerical computation shows
\begin{align*}h(0.8)=-0.0108<0.
\end{align*}Thus, $h(\alpha)<0$ for $\alpha\in (-1,0.8)$.
Let
$$g(\alpha)=\frac{\Gamma(1+\alpha)}{\Gamma(1+\frac{\alpha}{2})\Gamma(k+1+\frac{\alpha}{2})}, \quad k=1,2,....$$ Then it follows that
\begin{align*}
&\frac{d \log g(\alpha)}{d \alpha}=\psi(1+\alpha)-\frac{1}{2}\psi(1+\frac{\alpha}{2})-\frac{1}{2}\psi(k+1+\frac{\alpha}{2})\\
&\qquad\qquad\,<\psi(1+\alpha)-\frac{1}{2}\psi(1+\frac{\alpha}{2})-\frac{1}{2}\psi(2+\frac{\alpha}{2})\\
&\qquad\qquad\,=h(\alpha).
\end{align*}
 That is to say $g(\alpha)$ is strictly decreasing on $(-1,0.8)$. Therefore, (\ref{5.1}) implies that $S_{u}(\alpha)$ is strictly decreasing with respect to $\alpha\in(-1,0.8)$.

\end{proof}

{\bf Acknowledgements}  \quad The authors heartily thank  the anonymous reviewers for their careful review and for their effective suggestions.

{\bf Declarations}\\

{\bf Conflict of interests}\quad The authors declare that they have no conflict of interest.\\

{\bf Data availability statement}\quad My manuscript has no associated date.

\medskip
%\bibliographystyle{plain}
%\bibliographystyle{spbasic}      % basic style, author-year citations
%\bibliographystyle{spmpsci}      % mathematics and physical sciences
%\bibliographystyle{spphys}
%\begin{thebibliography}{100}
%\bibliographystyle{alpha}
%\bibliography{mybib}

%\begin{thebibliography}{10}

%\end{thebibliography}

\end{document}